\newtheorem{theorem}{Theorem}
\newtheorem{proposition}[theorem]{Proposition}
\theoremstyle{remark}
\newtheorem{remark}{Remark}
\newcommand{\bl}[1]{{#1}}
\let\le=\leqslant
\let\ge=\geqslant
\DeclareMathOperator{\argmax}{\mathrm{argmax}}
\begin{document}
\title{Layerwise computability and image randomness}
\author{Laurent Bienvenu\thanks{Laboratoire d'Informatique, de Robotique et de Micro\'electronique de Montpellier, CNRS, Universit\'e de Montpellier, \url{laurent.bienvenu@computability.fr}. Supported by ANR RaCAF grant.} \and  Mathieu Hoyrup\thanks{Laboratoire Lorraine de Recherche en Informatique et ses applications, \url{mathieu.hoyrup@inria.fr}} \and Alexander Shen \thanks{Visiting researcher, National Research University Higher School of Economics (HSE), Faculty of Computer Science, Kochnovskiy Proezd 3, Moscow, 125319, Russia;  Laboratoire d'Informatique, de Robotique et de Micro\'electronique de Montpellier, CNRS, Universit\'e de Montpellier, \url{alexander.shen@lirmm.fr}. Supported by ANR RaCAF grant.}}

\date{}
\maketitle

\begin{abstract}
Algorithmic randomness theory starts with a notion of an individual random object. To be reasonable, this notion should have some natural properties; in particular, an object should be random with respect to image distribution if and only if it has a random preimage. This result (for computable distributions and mappings, and Martin-L\"of randomness) was known for a long time (folklore); in this paper we prove its natural generalization for layerwise computable mappings, and discuss the related quantitative results.
\end{abstract}

\section{Introduction}
% Here we explain the philosophical origins of the question, give some definitions of transformations, and survey the results

Consider some random process, like coin tossing, that generates an infinite sequence of bits (zeros and ones). Assume that we have some theoretical model that describes this process. Such a model is some probability distribution $P$ on the space of possible outcomes; for example, fair coin tossing is modelled by \bl{the} uniform distribution on the space of infinite bit sequences, also known as Cantor space. Now suppose that we get, in addition to this theoretical model, some experimental data, a sequence $\omega$. The natural question arises: is this data consistent with the model? Is it plausible that a random process with output distribution $P$ produced $\omega$, or the conjecture about probability distribution should be rejected? Sometimes it should --- for example, if more than $90\%$ bits in any prefix of $\omega$ are zeros, the fair coin assumption does not look acceptable. The same is true if the sequence $\omega$ turns out to be a binary representation of $\pi$. On the other hand, some $\omega$ should be considered as plausible outputs (otherwise the conjecture is always rejected). But where is the boundary line?

Classical probability theory does not provide any answer to this question. This is what \emph{algorithmic randomness theory} is about. This theory defines a notion of a random sequence with respect to a probability distribution. More precisely, for a probability distribution $P$ on the Cantor space,  it defines a subset of the Cantor space whose elements are called \emph{random sequences with respect to $P$}, or \emph{$P$-random sequences}. In fact, there are several definitions; the most basic and popular one is called \emph{Martin-L\"of randomness} and was introduced in~\cite{martinlof}, but there are some others. Having such a notion, we can say that we reject the model (distribution) $P$ if the experimental data, the sequence $\omega$, is not random with respect to~$P$. This interpretation, however, comes with some obligations: the notion of randomness should have some properties to make it reasonable.  In this note we consider one of these properties: \emph{image randomness}. 

Imagine that we have a machine that consists of two parts: (1)~a random process that generates some bit sequence $\alpha$, and has distribution $P$, and (2)~some algorithmic process $M$ that gets $\alpha$ as input and produces some output sequence $\beta$. We may assume that the machine $M$ has an input read-only tape where $\alpha$ is written, unlimited memory (say, infinite work tapes) and write-only one-directional output tape where the bits of $\beta$ are written sequentially. Such a machine determines a mapping of the Cantor space into the space of finite and infinite binary sequences. When $M$ is applied to a random input having distribution $P$, the output sequence is a random variable. The distribution of this random variable is what is usually called a \emph{continuous semimeasure}, but we are mainly interested in the case when the output sequence is infinite with $P$-probability one. In this case the output distribution is some distribution $Q$ on Cantor space (the image of $P$). Such a scheme is used, for example, when we want to emulate some distribution $Q$ using a fair coin that produces independent uniformly distributed random bits. In this case $P$ is the uniform distribution on the Cantor space.
    
\begin{figure}[h]
\begin{center}
\includegraphics[scale=1]{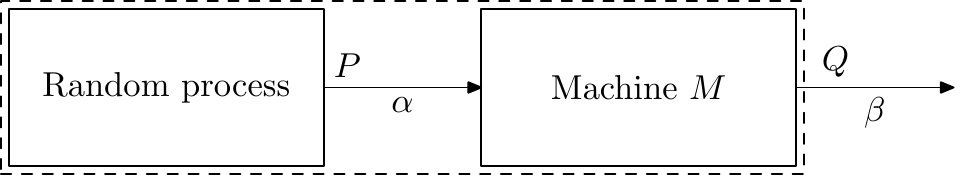}
\end{center}
\end{figure}
   
Now imagine that we observe some sequence $\beta$ as the output of such a composite random process. When does it look plausible? There are two possible answers. The first one: ``when \emph{$\beta$ is random with respect to $Q$}'', since $Q$ is the output distribution of the composite process. The second one: ``when there exists a plausible reconstruction of what happened inside the black box'', i.e., when \emph{there exists some $\alpha$ that is random with respect to $P$ and is mapped to $\beta$ by $M$}. To make the philosophical interpretation consistent, these two properties should be equivalent: for a satisfactory notion of algorithmic randomness the image of the set of algorithmically random with respect to $P$ sequences should coincide with the set of algorithmically random with respect to $Q$ sequences, where $Q$ is the image of $P$ under the transformation.

This property can be split into two parts. One part says that $M$-image of every $P$-random sequence is infinite and $Q$-random. This part can be called \emph{randomness conservation}. The other part claims that every $Q$-random sequence is an $M$-image of some $P$-random sequence: \emph{randomness cannot be created out of nothing}.

It turns out that this property holds for Martin-L\"of's definition of randomness and computable $P$. Note that Martin-L\"of's definition is applicable only to computable distributions; if $P$ is computable and $M(\alpha)$ is infinite with $P$-probability $1$, then $Q$ is also computable, so Martin-L\"of's definition can be applied to both $P$ and $Q$. This is a folklore result known for a long time; the third author remembers discussing it with Andrej Muchnik around 1987  (it seems that there was even a short note written by A.S. about that in the proceedings of the First Bernoulli congress in Uzbekistan, then USSR; most probably nobody read it, and we cannot locate this volume now). It was independently discovered by many people, sometimes for some special cases. 

Here is one example where this property is useful. Let us prove that for every random sequence $\omega$ with respect to the uniform Bernoulli measure $B_{1/2}$ (independent trials, ones have probability $1/2$) there exist a sequence $\tau$ that is random with respect to $B_{1/3}$ (Bernoulli distribution where ones have probability $1/3$) that can be obtained from $\omega$ by changing some ones to zeros. Indeed, let us imagine a random process  that first generates a sequence of random\footnote{A sequence of reals can be encoded as a two-dimensional table of bits, if each real is replaced by its binary representation; then this two-dimensional table can be rearranged into a sequence of bits, and the corresponding distribution on the sequences is $B_{1/2}$. So we are still in the same setting with Cantor space and computable transformations, and can apply our property.}  independent real numbers $\vec{\xi}= \xi_1,\xi_2,\ldots$ uniformly distributed in $[0,1]$ and then converts them to bits using threshold $1/2$ (i.e., the $i$-th bit is $1$ if $\xi_i<1/2$ and $0$ otherwise). The output distribution is $B_{1/2}$, so by the `no-randomness-from-nothing' property any $B_{1/2}$-random sequence $\omega$ is can be obtained from some random sequence $\vec{\xi}$ of reals. Now consider the same process, but with a threshold of $1/3$. The measure induced by this process is $B_{1/3}$, so by randomness conservation, we get a $B_{1/3}$-random sequence, and this sequence is coordinate-wise smaller than the original one.

Recently a notion of \emph{layerwise computable mapping} was introduced~\cite{mh}; it provides a good level of generality for the image randomness property, and makes the proof nicely balanced. In the following section we reproduce the definition of layerwise computable mapping and discuss its reformulation in terms of machines with rewritable output. Then (Section~\ref{proof}) we give the proof of image randomness property for layerwise computable mappings. Finally, in Section~\ref{conclusion} we consider some extensions and version of the main result (including a quantitative version for randomness deficiencies, and a version for multi-valued mappings).

\section{Layerwise computable mappings}\label{sec:layerwise}

Let $P$ be a computable distribution on the Cantor space $\Omega=\{0,1\}^{\mathbb{N}}$. Computability means that the function $p(x)=P(x\Omega)$, where $x\Omega$ is the set of all infinite sequences that have prefix $x$, is computable: given $x$ and rational $\varepsilon>0$, we can compute some rational $\varepsilon$-approximation to $p(x)$. Let us recall Martin-L\"of's definition of randomness. A \emph{Martin-L\"of test} is a decreasing sequence 
    $$
U_1 \supset U_2 \supset U_3\supset\ldots    
    $$ 
of uniformly effectively open\footnote{This means that there is an algorithm that, given $n$, enumerates a sequence $x_0,x_1,x_2,\ldots$ of strings such that $U_n$ is the union of intervals $x_i\Omega$.} sets such that $P(U_i)\le 2^{-i}$.  A sequence $\omega$ is \emph{rejected} by this test if $\omega\in U_n$ for all $n$. Sequences that are not rejected by any Martin-L\"of test are called \emph{Martin-L\"of random} with respect to the distribution $P$. We also call them \emph{Martin-L\"of $P$-random} or just \emph{$P$-random}.

Consider some mapping $F$ defined on the set of all $P$-random sequences; the values of $F$ are infinite bit sequences. We say that $F$ is \emph{layerwise computable} if there exists a Martin-L\"of test
    $$
U_1 \supset U_2 \supset U_3\supset\ldots    
    $$ 
and a machine $M$ that, given $i$ and a sequence $\omega\notin U_i$ (on the input tape), computes $F(\omega)$, i.e., prints $F(\omega)$ bit by bit on the output tape. Note that every random sequence $\omega$ is outside $U_i$ for large enough $i$, so $F(\omega)$ is computed by $M$ for all sufficiently large $i$. In other terms, for a layerwise computable function $F$ we do not have one algorithm that computes $F(\omega)$ for all $\omega$, but a family of algorithms $M_i(\cdot)=M(i,\cdot)$ that compute $F$ outside the sets $U_i$ of decreasing measure. Note that the behavior of $M_i$ on $\omega\in U_i$ is not specified: $M_i(\omega)$ may be \bl{an} arbitrary finite or infinite sequence (and may differ from $F(\omega)$ even if $F(\omega)$ is defined).

The original definition of layerwise computable mappings used the notion of \emph{randomness deficiency}. Here is a definition of this notion. Given a Martin-L\"of test $U_1\supset U_2\supset\ldots$, we define the corresponding deficiency function 
    $
d_U(\omega)=\sup \{i\mid \omega\in U_i\}.   
    $
The function $d_U$ is infinite on the sequences rejected by $U$, so $d_U(\omega)$ is finite for all random sequences. The definition of layerwise computable mapping can be now reformulated as follows: \emph{there is a machine that computes $F(\omega)$ given $\omega$ and some $i>d_U(\omega)$}.

Martin-L\"of showed that there exists a \emph{universal} test $V$ for which the deficiency function $d_V$ is maximal (up to an additive constant); this test, therefore, rejects \emph{all} non-random sequences. Let us fix some universal test $V$. Since $d_V$ is maximal, an upper bound for $d_V(\omega)$ is also (up to some constant) an upper bound for $d_U$ for every test $U$. So we may say that $F$ is layerwise computable if there is an algorithm that computes $F(\omega)$ given $\omega$ and an upper bound for $d_V(\omega)$. This is how the layerwise computable mappings were defined in~\cite{mh}.
\smallskip

To understand better the intuitive meaning of this definition, let us consider layerwise computable mappings with one-bit output (the value is $0$ or $1$). The definition is essentially the same: a mapping $f$ is layerwise computable if there exists a test $U_i$ and a machine $M$ that, given a sequence $\omega$ and $i$ such that $\omega\notin U_i$, computes~$f(\omega)$. The difference is that now $f(\omega)$ is just one bit, not a bit sequence.

Let $F$ be a mapping defined on random sequences whose values are infinite sequences, and let $F_k$ are individual bits of this mapping, so  
     $$
F(\omega)=F_0(\omega) F_1(\omega) F_2(\omega)\ldots
     $$
The following simple observation says that we can define layerwise computability in a co\-ordinate-wise way:

\begin{proposition}\label{pr:coordinates}
The mapping $F$ is layerwise computable if and only if all the mappings $F_k$ are uniformly layerwise computable.
\end{proposition}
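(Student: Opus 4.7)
The plan is to prove both directions directly, leveraging the reformulation of layerwise computability in terms of a bound on the universal deficiency $d_V$, which gives us a single test to work with on both sides and cleanly handles the ``uniformly'' quantifier.

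For the forward direction, assume $F$ is layerwise computable, witnessed by a machine $M$ that, given $i$ and an input $\omega$ with $d_V(\omega)\le i$, writes $F(\omega)=F_0(\omega)F_1(\omega)\ldots$ bit by bit on its output tape. To show that each $F_k$ is layerwise computable uniformly in $k$, I would define a single machine $N(k,i,\omega)$ that simulates $M(i,\omega)$, counts the bits appearing on its output tape, and halts as soon as the $(k+1)$-th bit is produced, outputting that bit. Since $M(i,\omega)$ produces an infinite output whenever $d_V(\omega)\le i$, this simulation terminates correctly on every such $\omega$, giving a uniform layerwise computation of $F_k$ with the same deficiency bound.

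For the converse, suppose that the $F_k$ are uniformly layerwise computable, i.e., there is a single algorithm $N$ such that $N(k,i,\omega)=F_k(\omega)$ whenever $d_V(\omega)\le i$. I would then construct a machine $M$ which, on input $(i,\omega)$, runs $N(0,i,\omega)$, writes its output bit on the output tape, then runs $N(1,i,\omega)$, writes its output bit, and so on. Provided $d_V(\omega)\le i$, each call returns the correct bit $F_k(\omega)$ in finite time, so $M(i,\omega)$ writes $F(\omega)$ bit by bit. This exhibits $F$ as layerwise computable with the universal test.

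The argument is essentially bookkeeping, and there is no serious obstacle; the only point that needs a moment of care is the uniformity, which is handled ``for free'' by passing through the universal deficiency $d_V$ as described after the definition of layerwise computability. An equivalent phrasing without $d_V$ would require one to merge the individual Martin-L\"of tests witnessing layerwise computability of each $F_k$ into a single test (e.g.\ by taking the $U_i^{(k)}$ indexed over all $k$ and recombining levels), but this is unnecessary once one uses the reformulation already recorded in the paper.
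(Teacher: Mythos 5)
Your argument is correct in substance but takes a different route from the paper's: you work throughout with the universal deficiency $d_V$, whereas the paper keeps the individual tests $U^k$ witnessing layerwise computability of each $F_k$ and merges them explicitly into a single test along diagonals, setting $U_i=U^{0}_{i+1}\cup U^{1}_{i+2}\cup\ldots$, so that $P(U_i)\le 2^{-(i+1)}+2^{-(i+2)}+\cdots\le 2^{-i}$, and then computes the $k$-th bit outside $U_i$ by calling the $k$-th algorithm with the bound $i+k+1$. Your forward direction coincides with the paper's (same machine, extract the $k$-th output bit). The one place where you are too quick is the claim that the reformulation via $d_V$ makes the merging ``unnecessary'' and handles uniformity ``for free''. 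The hypothesis, in the form in which it is actually supplied when the proposition is applied later (e.g.\ in Propositions~\ref{pr:general} and~\ref{pr:machine-definition}), hands you a uniformly given family of tests $U^k$, one per coordinate; the maximality of $d_V$ then gives $d_{U^k}(\omega)\le d_V(\omega)+c_k$ with a constant $c_k$ \emph{depending on $k$}, and to run your machine $M$ you must be able to compute $c_k$ from $k$. That this constant is uniformly computable is true, but establishing it amounts to embedding the uniform family $U^k$ into a single test --- which is exactly the diagonal merge you declared unnecessary (equivalently, an appeal to the uniform construction of the universal test). So your proof goes through once the hypothesis is read as ``a single algorithm $N$ with $N(k,i,\omega)=F_k(\omega)$ whenever $d_V(\omega)\le i$'', but the equivalence of that reading with the natural one is where the (small) mathematical content of the proposition lives; you should either prove that equivalence or adopt the paper's direct merge.
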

   
\begin{proof}
One direction is obvious: if $F$ is layerwise computable, the same machine can be used for all $k$ to compute $F_k$.  The other direction requires some reasoning: layerwise computability of $F_k$ gives us some test $U^k$, i.e., a decreasing sequence $U^{k}_{1}\supset U^{k}_2\supset U^{k}_3\supset\ldots$ of uniformly effectively open sets, and some algorithm $M^k$ that\bl{,} given $i$, computes $F_k(\omega)$ for all $\omega\notin U^{k}_{i}$. We combine these tests $U^k$ into one test $U$ along the diagonals : let
     $$U_1=U^{0}_2 \cup U^{1}_3 \cup U^{2}_4 \cup\ldots,\quad
       U_2=U^{0}_3 \cup U^{1}_4 \cup U^{2}_5 \cup\ldots,\ \text{etc.}$$
Then we have
    $$P(U_i)\le P(U^{0}_{i+1})+ P(U^{1}_{i+2})+P(U^{2}_{i+3})+\ldots\le 2^{-(i+1)}+2^{-(i+2)}+2^{-(i+3)}+\ldots \le 2^{-i},$$
as required.  We can compute all the bits of $F(\omega)$ for $\omega\notin U_i$, using upper bound $i+k+1$ in the algorithm that layerwise-computes $F_k(\omega)$.  
\end{proof}
   
A layerwise computable mapping $f$ with bit values splits all random $P$-sequences into two classes (preimage of $1$ and preimage of $0$), and therefore defines a point in a metric space of all measurable subsets $X$ of $\Omega$. This metric space has distance function $d(X,Y)=P(X\bigtriangleup Y)$; the points in this space are equivalence classes (two sets $X,Y$ are equivalent if $d(X,Y)=0$, i.e., their symmetric difference $X\bigtriangleup Y$ is a \bl{$P$-}null set). In this space the family of clopen sets (in other terms, the family of sets that are finite unions of intervals) is a countable dense subset on which the distance function is computable in a natural sense (recall that $P$ is a computable distribution). So this space can be considered as a constructive metric space, and computable points in it are defined as points for which we can compute a clopen approximation with any given precision.  An equivalent definition of computable points: consider computable sequences of clopen sets $C_0,C_1,C_2,\ldots$ such that $d(C_i,C_{i+1})\le 2^{-i}$; their limits are computable points.

\begin{proposition}
       \label{pr:points}
Each layerwise computable mapping $f$ of Cantor space to $\{0,1\}$ corresponds to a computable point in the metric space of measureable subsets of $\Omega$. This is a one-to-one correspondence: different layerwise computable mappings correspond to different computable points, and each computable point is obtained from some layerwise computable mapping.
\end{proposition}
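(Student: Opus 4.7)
The plan is to verify three things: (a) every layerwise computable $f\colon\Omega\to\{0,1\}$ induces a computable point in the metric space of measurable subsets; (b) every computable point arises from such an $f$; (c) two layerwise computable mappings representing the same computable point agree on every random sequence.

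For (a), fix a Martin-L\"of test $(U_i)$ and machine $M$ witnessing the layerwise computability of $f$. Let $A_i=\{\omega:M(i,\omega)\text{ outputs }1\}$ and $B_i=\{\omega:M(i,\omega)\text{ outputs }0\}$. These are uniformly effectively open, disjoint, and cover $\Omega\setminus U_i$, hence $P(A_i)+P(B_i)\ge 1-2^{-i}$. Enumerating cylinders of $A_i$ and of $B_i$ in parallel, I can effectively find partial unions of measures $p$ and $q$ with $p+q\ge 1-2^{-i+1}$; the clopen set $C_i$ formed by the first union satisfies $P(A_i\setminus C_i)\le (1-q)-p\le 2^{-i+1}$. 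On random sequences outside $U_i$, $A_i$ agrees with $f^{-1}(1)$, so $P(A_i\bigtriangleup f^{-1}(1))\le 2^{-i}$, and therefore $d(C_i,f^{-1}(1))\le 2^{-i+2}$. After reindexing, $(C_i)$ realizes $f^{-1}(1)$ as a computable point.

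For (b), given clopen $C_0,C_1,\ldots$ with $d(C_j,C_{j+1})\le 2^{-j}$, set $V_i=\bigcup_{j\ge i}(C_j\bigtriangleup C_{j+1})$. The $V_i$ are uniformly effectively open, decreasing, with $P(V_i)\le 2^{-i+1}$, so after reindexing they form a Martin-L\"of test. For $\omega\notin V_i$ the bit $\mathbf{1}_{C_j}(\omega)$ is constant in $j\ge i$ and equals the computable value $\mathbf{1}_{C_i}(\omega)$ (since $C_i$ is clopen); declaring $f(\omega)$ to be this stable bit yields a layerwise computable mapping. By Borel--Cantelli, $\mathbf{1}_{C_j}\to\mathbf{1}_B$ almost surely for some measurable $B$, and $f=\mathbf{1}_B$ on a full-measure subset of random sequences, so $f^{-1}(1)$ represents the computable point that is the limit of $(C_j)$.

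For (c), it suffices to show that if $g\colon\Omega\to\{0,1\}$ is layerwise computable with $P\{g=1\}=0$, then $g\equiv 0$ on all random sequences; one then applies this to $g=f_1\oplus f_2$, which inherits layerwise computability from $f_1,f_2$ by combining their tests and machines. With test $(U_i)$ and machine $M$ for $g$, set $A_i=\{\omega:M(i,\omega)=1\}$. The set $A_i\setminus U_i$ is contained in $\{g=1\}$ up to a null set of non-random points, so $P(A_i)\le P(U_i)+P\{g=1\}=2^{-i}$. The sets $\widetilde A_i=\bigcup_{j\ge i}A_j$ are uniformly effectively open, decreasing, and satisfy $P(\widetilde A_i)\le 2^{-i+1}$; after reindexing they form a Martin-L\"of test. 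If $g(\omega)=1$ for some random $\omega$, then for each sufficiently large $i$ we have $\omega\notin U_i$ and $M(i,\omega)=1$, so $\omega\in A_i\subseteq\widetilde A_i$ for every $i$, contradicting randomness. The subtle step is this last one: it converts measure-theoretic almost-everywhere equality into genuine pointwise equality on every random sequence. This is a characteristic algorithmic-randomness argument — bundle the potential witnesses of disagreement into a single effective null set — and the care lies in arranging that the resulting sequence of open sets is nested with the right measure bound.
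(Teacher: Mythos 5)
Your proof is correct and follows essentially the same route as the paper's: clopen approximations $C_i$ extracted from the machine's behaviour with error controlled by $U_i$ for the forward direction, and tail unions $\bigcup_{j\ge i}(C_j\bigtriangleup C_{j+1})$ serving as the Martin-L\"of test for the converse. The only local deviations --- truncating $A_i$ to a clopen set by computing measures where the paper instead covers $\Omega$ by finitely many intervals of $U_i$ and of the machine's domain via compactness, and proving injectivity directly through $g=f_1\oplus f_2$ and the test $\bigcup_{j\ge i}A_j$ rather than by checking that the two constructions are mutually inverse --- are minor variants of the same underlying arguments.
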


\begin{proof}
Let $f$ be a layerwise computable mapping and \bl{let} $U_1\supset U_2\supset\ldots$ be a corresponding test. Let $M_i$ be a machine that computes $f$ outside $U_i$. The domain of $M_i$ (the set of sequences where $M_i$ produces $0$ or $1$) is an effectively open set; simulating $M_i$ on all inputs, we can enumerate intervals of this domain together with the corresponding values ($0$ or~$1$). Denote this domain by $V_i$; by definition we know that $V_i\cup U_i$ is the entire Cantor space. Due to compactness, we can wait until finitely many intervals from $U_i$ and $V_i$ are found that cover (together) the entire space. Let $C_i$ be the union of the intervals in $V_i$ that are found up to this moment with value $1$. Note that $f$ is not guaranteed to be $1$ on all the points in $C_i$, since some of them may belong to $U_i$ and in this case the value of $M_i$ may differ from the value of $f$. But we know that the symmetric difference of $f^{-1}(1)$ and $C_i$ is contained in $U_i$, so it has $P$-measure at most $2^{-i}$. Therefore, the point $f^{-1}(1)$ is computable. Note also that the sets $C_i$ (their indicator functions) converge to $f$ pointwise at every $P$-random point.

To prove the rest of the proposition, let us construct the inverse mapping.  Assume that $C$ is some computable point. Then there exists  a computable sequence $C_0,C_1,C_2,\ldots$ of clopen sets such that $d(C_i,C)\le 2^{-i}$. This inequality implies that $d(C_i,C_{i+1})\le 2\cdot 2^{-i}$ (both $C_i$ and $C_{i+1}$ are close to the limit, so they are close to each other). Now the set
     $$
W_i=(C_i\bigtriangleup C_{i+1})\cup (C_{i+1}\bigtriangleup C_{i+2})\cup
(C_{i+2}\bigtriangleup C_{i+3})\cup\ldots
     $$
is an effectively open set of measure at most $2^{i-1}+2^{i}+2^{i+1}+2^{i+2}+\ldots=4\cdot 2^{-i}$ that covers all the points $\omega$ where the sequence $C_i(\omega)$ does not stabilizes. Therefore, these bad points are all non-random and the limit mapping is well defined on all random points. This limit mapping is layerwise computable, since the sets $W_i$ (more precisely, $W_{i+2}$) form a test, and $C_i$ computes the limit mapping outside $W_i$. 

Now let us show that this correspondence (computable points $\to$ layerwise computable mappings) is well defined, i.e., we get the same layerwise computable mapping starting from two representations of the same point. If $C_0,C_1,\ldots$ and $C'_0,C'_1,\ldots$ are two computable sequences of clopen sets that both converge to some computable point $C$ (with prescribed speed of convergence), then $d(C_i,C'_i)$ is also bounded by $2\cdot 2^{-i}$, and 
     $$
S_i=(C_i\bigtriangleup C'_{i})\cup (C_{i+1}\bigtriangleup C'_{i+1})\cup
(C_{i+2}\bigtriangleup C'_{i+2})\cup\ldots
     $$
is again a test, so for every random sequence $\omega$ we have $C_i(\omega)=C'_i(\omega)$ for all sufficiently large $i$. Therefore, $C_i$ and $C'_i$ give us the same layerwise computable mapping (defined on random sequences).     

It remains to note that these two constructions are inverse. Starting with a layerwise computable mapping $f$, we get a sequence of clopen sets $C_i$ that pointwise converges to $f$, and also converges to the corresponding point in the metric space (the distance between $C_i$ and the limit is at most $2^{-i}$), so it can be used in the second construction, and we get the same mapping on random sequences as its pointwise limit. On the other hand, starting with a computable point, we get a layerwise computable mapping that represents that same point, since the distance between $C_i$ and the pointwise limit of the $C_n$'s is $O(2^{-i})$. This ends the proof of the proposition.
\end{proof}

\begin{remark}
Looking at the construction of a layerwise computable mapping for a given computable sequence $C_i$, we may note that the test $S_i$ appearing in this construction is in fact a \emph{Schnorr test}; this means that the measure of $S_i$ is uniformly computable given~$i$. Therefore we get convergence for all \emph{Schnorr random} sequences $\omega$, i.e., for all sequences not rejected by any Schnorr test. Combining this observation with Proposition~\ref{pr:coordinates}, we see that every layerwise computable mapping (with bit or sequence values) originally defined on Martin-L\"of random sequences, can be naturally extended to the bigger set of Schnorr random sequences.
\end{remark}
   
Recall that in the original statement about image randomness we considered computable mappings of the Cantor space into a space of finite and infinite sequences whose values are $P$-almost everywhere infinite. To see that those mappings are indeed a special case of our notion of a layerwise computable mapping, we need the following simple result.

\begin{proposition}\label{pr:general}
Let $P$ be a computable distribution on the Cantor space. Let $F$ be a computable mapping of the Cantor space to the space of finite and infinite sequences. Assume that $F(\omega)$ is an infinite sequence for $P$-almost all~$\omega$. Then $F(\omega)$ is infinite for all Martin-L\"of random sequences, and $F$ is a layerwise computable mapping on them.
\end{proposition}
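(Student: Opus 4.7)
The plan is to build a Martin-Löf test directly from the computable machine $M$ that computes $F$, and then observe that $M$ itself already witnesses layerwise computability with respect to it.

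For each $n \ge 1$, let $A_n$ be the (uniformly) effectively open set of $\omega$ on which $M$ eventually writes at least $n$ output bits, and let $V_n$ be its closed complement. The $V_n$ are increasing and their union is $\{\omega : F(\omega) \text{ is finite}\}$, which has $P$-measure $0$ by hypothesis, so $P(V_n) \to 0$. Since $P$ is fully computable and $A_n$ is effectively open uniformly in $n$, $P(A_n)$ is (uniformly) lower semicomputable, and therefore $P(V_n) = 1 - P(A_n)$ is upper semicomputable.

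Next, for each $i$, I would effectively find an $n_i \ge i$ together with a certified rational upper bound on $P(V_{n_i})$ strictly less than $2^{-i-2}$; this is possible by dovetailing the enumerations of upper bounds for $P(V_i), P(V_{i+1}), \ldots$ and picking the first $n$ for which a sufficiently small upper bound appears (some such $n$ must exist because $P(V_n) \to 0$). I then enumerate the basic intervals of $A_{n_i}$, forming an increasing sequence of clopen approximations $A_{n_i}^{(t)}$ with exactly computable measures, and wait for the first stage $t_i$ at which $P(\Omega \setminus A_{n_i}^{(t_i)}) < 2^{-i-1}$; such a stage must arrive because $1 - P(A_{n_i}^{(t)})$ decreases to $P(V_{n_i}) < 2^{-i-2}$. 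Set $\tilde U_i := \Omega \setminus A_{n_i}^{(t_i)}$, which is a clopen superset of $V_{n_i}$ of measure at most $2^{-i-1}$, and let $U_i := \bigcup_{j \ge i} \tilde U_j$: a nested sequence of uniformly effectively open sets with $P(U_i) \le 2^{-i}$.

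Finally, for $\omega \notin U_i$ we have $\omega \in A_{n_j}$ for every $j \ge i$; since $n_j \ge j \to \infty$, $M$ produces arbitrarily many bits when run on $\omega$, so $F(\omega)$ is infinite and coincides with the output of $M$ on $\omega$. Hence $M$, with its index input $i$ simply ignored, witnesses the layerwise computability of $F$, and since every Martin-Löf random sequence escapes some $U_i$, the first conclusion (that $F(\omega)$ is infinite on every such $\omega$) is immediate. The only real obstacle in the whole argument is the effectivity bookkeeping: it is precisely the upper semicomputability of $P(V_n)$, enabled by the full computability of $P$, that allows the closed sets $V_n$ to be thickened into effectively open supersets of sufficiently small measure — a routine but crucial step.
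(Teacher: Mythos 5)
Your proof is correct and follows essentially the same route as the paper's: both exploit that the domain on which $n$ output bits get written is an effectively open set of full $P$-measure, approximate it from inside by a clopen set of measure at least $1-2^{-i}$ (possible because $P$ is computable), and take complements to form the Martin-L\"of test witnessing layerwise computability. The only difference is organizational --- the paper reduces to individual output bits via Proposition~\ref{pr:coordinates} and combines the per-coordinate tests afterwards, whereas you stratify directly by the number of output bits and make the test nested by taking tails of unions; both are standard bookkeeping.
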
   

\begin{proof}
According to Proposition~\ref{pr:coordinates}, we may consider each coordinate of the mapping $F(\omega)=F_0(\omega)F_1(\omega)\ldots$ separately. We know that $F_i(\omega)$ is defined for $P$-almost all sequences, so the domain of $F_i$ is an effectively open set of $P$-measure $1$. We enumerate this set until we get a clopen lower bound for it of measure at least $1-2^{-i}$. Its complement $U_i$ is a (cl)open set of $P$-measure at most $2^{-i}$, and we get a Martin-L\"of test.  So every Martin-L\"of random sequence belongs to the domain of $F_i$, and we can compute $F_i$ outside of $U_i$ (in fact, the same machine is used for all $i$). So all $F_i$ (and $F$ in general) are layerwise computable. 
\end{proof}

We finish this section by giving a natural ``machine-language'' description of layerwise computable mapping. Consider a machine $M$ that has a read-only input tape, work tapes, and a \emph{rewritable} output tape. Initially the output tape contains all zeros, but the machine can change them to ones back and forth at any time. What matters is only the limit value of a cell if the limit exists; if not, the corresponding output bit is undefined. Without additional restrictions, this model is equivalent to computability relativized to the halting problem (and this is not interesting for us now). We pose an additional restriction: for each $i$ and for each rational $\varepsilon>0$ one can compute an integer $N(i,\varepsilon)$ such that the probability of the event
\begin{quote}
\emph{machine $M$ with input $\omega$ changes $i$-th output cell after more than $N(i,\varepsilon)$ steps}
\end{quote}
(taken over $P$-distribution on all inputs $\omega$) does not exceed $\varepsilon$.  
This condition implies that the output is well defined (all output values stabilize) for $P$-almost every input $\omega$.

\begin{proposition}\label{pr:machine-definition}
The output sequence is well defined for all Martin-L\"of random input sequences. The resulting mapping is layerwise computable; every layerwise computable mapping can be obtained in this way.
\end{proposition}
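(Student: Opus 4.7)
The proof splits into two directions, which I will handle separately.

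For the forward direction (a machine of the described type gives a layerwise computable mapping, well-defined on ML-random inputs), the plan is to convert the data $N(i,\varepsilon)$ directly into a Martin-L\"of test. For each pair $(i,k)$, let $A_{i,k}$ denote the set of $\omega$ for which $M$, run on $\omega$, changes cell $i$ at some step later than $N(i,2^{-k})$. This set is effectively open because it can be enumerated by simulating $M$ on finite prefixes and watching for a late change, and the hypothesis gives $P(A_{i,k})\le 2^{-k}$. I then form the diagonal combination $U_k=\bigcup_i A_{i,\,k+i+1}$, an effectively open set of $P$-measure at most $\sum_i 2^{-(k+i+1)}=2^{-k}$; making the sequence monotone via $U'_k=\bigcap_{j\le k}U_j$ (finite intersections of effectively open sets are effectively open) produces a genuine Martin-L\"of test. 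For any $\omega\notin U'_k$, each output bit $i$ is stable after at most $N(i,2^{-(k+i+1)})$ simulation steps, a value computable uniformly from $k$ and $i$; so each coordinate $F_i$ is uniformly layerwise computable and Proposition~\ref{pr:coordinates} assembles these into a layerwise computable mapping. Well-definedness of the output on every ML-random input is immediate since such an $\omega$ escapes some $U'_k$.

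For the converse (every layerwise computable mapping arises this way), I will use Propositions~\ref{pr:coordinates} and~\ref{pr:points} to reduce to a situation where the quantitative bound $N(i,\varepsilon)$ is immediately visible. Given a layerwise computable $F$, Proposition~\ref{pr:coordinates} provides layerwise computable coordinates $F_i$, and Proposition~\ref{pr:points} supplies, for each $i$, a computable sequence of clopen sets $C_{i,n}$ with $d(C_{i,n},F_i^{-1}(1))\le 2^{-n}$ and $C_{i,n}(\omega)\to F_i(\omega)$ for every ML-random $\omega$. I define $M$ to dovetail over the pairs $(i,n)$ in order of $i+n$: at each such pair, $M$ computes the (finite) clopen description of $C_{i,n}$, reads the finitely many bits of $\omega$ required to decide membership, and writes $\mathbf{1}_{C_{i,n}}(\omega)$ to cell $i$ of the rewritable tape. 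Correctness on ML-random $\omega$ follows from pointwise convergence.

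The remaining task is to verify the stabilization bound, which is the only quantitative step. Cell $i$ can only be overwritten at a pair $(i,m)$, and the measure of the $\omega$ for which such a pair writes a value different from the one written at $(i,m-1)$ is at most $P(C_{i,m}\triangle C_{i,m-1})\le 2^{-m}+2^{-(m-1)}\le 3\cdot 2^{-m}$ by the triangle inequality around $F_i^{-1}(1)$. Summing over $m\ge n$ bounds by $6\cdot 2^{-n}$ the measure of $\omega$'s for which any change to cell $i$ occurs at a pair $(i,m)$ with $m\ge n$. Given $i$ and $\varepsilon$, I pick $n$ with $6\cdot 2^{-n}\le\varepsilon$ and set $N(i,\varepsilon)$ equal to the (computable) step by which the dovetailing finishes processing the pair $(i,n)$. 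The main thing to watch is bookkeeping rather than mathematics: making $U_k$ a genuine monotone test in the first direction, and arranging the dovetail schedule so that the step count for each pair is a computable function of $(i,n)$ in the second.
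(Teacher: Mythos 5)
Your argument is correct in substance, and the two directions relate to the paper's proof differently. For the converse you follow essentially the paper's route: clopen approximations $C_{i,n}$ of each coordinate (obtained via Propositions~\ref{pr:coordinates} and~\ref{pr:points}), a staged, dovetailed machine that rewrites cell $i$ according to $C_{i,n}(\omega)$, and the bound $P(C_{i,m}\bigtriangleup C_{i,m-1})=O(2^{-m})$ to read off $N(i,\varepsilon)$. For the forward direction you are more direct than the paper: you convert the ``late-change'' events $A_{i,k}$ straight into a Martin-L\"of test, whereas the paper takes the contents of cell $i$ after $N(i,2^{-k})$ steps as clopen sets $C_k$ and reuses the argument of Proposition~\ref{pr:points}; both work, and yours avoids the detour through the metric-space picture. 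One bookkeeping step is wrong as written, though easily repaired: monotonizing by $U'_k=\bigcap_{j\le k}U_j$ only gives you that $\omega\notin U'_k$ implies $\omega\notin U_j$ for \emph{some} $j\le k$, not that $\omega\notin U_k$, so the claimed stabilization time $N(i,2^{-(k+i+1)})$ is not justified (nothing forces $N(i,\varepsilon)$ to grow as $\varepsilon$ shrinks). Fix it by bounding the stabilization time by $\max_{j\le k}N(i,2^{-(j+i+1)})$, or by using the standard monotonization $V_k=\bigcup_{j\ge k+1}U_j$ (which does yield $\omega\notin U_{k+1}$ at the cost of a harmless index shift), or by replacing $N$ with a running maximum so that the sets $U_k$ are decreasing to begin with.
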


\begin{proof}
Again we may use Proposition~\ref{pr:coordinates} and consider each coordinate separately. The contents of $i$-th output cell after $t$ steps is determined by a finite prefix of \bl{the} input sequence $\omega$, so we get some clopen set. Taking $t=N(i,2^{-k})$, we get in this way some clopen set $C_k$ and know that the indicator function of $C_k$ coincides with the limit value for all inputs except for a set of measure at most $2^{-k}$. Then we use the same argument as in Proposition~\ref{pr:points}, and conclude that the resulting mapping is layerwise computable. 

It remains to show that every layerwise computable mapping can be transformed into a machine of described type. As we have seen, a layerwise computable mapping with bit values is a limit of a sequence of (\bl{the} indicator functions of) clopen sets $C_n$ such that $P(C_n\bigtriangleup C_{n+1})<2^{-n}$. The machine $M$ then works in stages; at $n$-th stage it computes $C_n$ and then, using $C_n$ as a table, changes (if necessary) the output bit to make it consistent with $C_n(\omega)$, where $\omega$ is the input sequence. Then $N(2^{-n})$ is the number of steps sufficient to perform $n+1$ stages of this procedure (for arbitrary input $\omega$).

This is for one\bl{-}bit output; we can do the same in parallel for all output bits (interleaving the computations for different output bits) and then define $N(i,2^{-n})$ in such a way that the machine is guaranteed to terminate $n+1$ stages for bit $i$ in at most $N(i,2^{-n})$ steps.
\end{proof}

This characterization of layerwise computable mappings is not used much in the sequel; we provide it here for two reasons. First, it gives some intuitive understanding of this notion on the ``machine language'' level. Second, it turns out that this is not only a theoretical characterization of layerwise computable mappings: some natural randomized algorithms really define a layerwise computable mapping in this way. One example where this phenomenon happens is an infinite version of Moser--Tardos algorithm for Lovasz local lemma used by Andrei Rumyantsev to prove a computable version of this lemma (see~\cite{rumyantsev-arxiv,rumyantsev-exposition}).
   
\section{Image randomness}
\label{proof}

In the section we prove the image randomness property for layerwise computable mappings (Theorem~\ref{th:main}).

Let $P$ be a computable distribution on the Cantor space $\Omega$, and let $F\colon \Omega\to\Omega$ be a layerwise computable mapping defined on $P$-random sequences and having infinite bit sequences as values. Then an image probability distribution $Q$ on the Cantor space is defined in the usual way: $Q(X)=P(F^{-1}(X))$. Note that since $F$ is defined $P$-almost everywere, the distribution $Q$ is well defined.  

\begin{theorem}\label{th:main}

\textup{\textbf{(i)}}~The distribution $Q$ is computable.

\textup{\textbf{(ii)}}~If a sequence $\alpha$ is Martin-L\"of random with respect to $P$, then $F(\alpha)$ is Martin-L\"of random with respect to $Q$.

\textup{\textbf{(iii)}}~If a sequence $\beta$ is Martin-L\"of random with respect to $Q$, then there exists some sequence $\alpha$ that is Martin-L\"of random with respect to $P$ such that $F(\alpha)=\beta$.
\end{theorem}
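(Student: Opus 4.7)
The proof splits into three parts; (i) and (ii) follow directly from the layerwise data $(\{U_i\},\{M_i\})$ for $F$ combined with the earlier propositions, while (iii) is the substantial one.

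For (i), by Proposition~\ref{pr:coordinates} it suffices to verify that $Q(x\Omega)$ is computable for each cylinder $x=x_1\cdots x_n$. Since $Q(x\Omega)=P\bigl(\bigcap_{k\le n}F_k^{-1}(x_k)\bigr)$ and Proposition~\ref{pr:points} provides each $F_k^{-1}(1)$ with computable clopen $P$-approximations, combining $n$ such approximations yields an effective rational approximation of $Q(x\Omega)$.

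For (ii), suppose $\{V_m\}$ is a $Q$-Martin-L\"of test with $F(\alpha)\in\bigcap_m V_m$, and define $W_m=\{\omega:M_m(\omega)\in V_m\}$. This is effectively open uniformly in $m$ (one runs $M_m$ until its output enters an enumerated cylinder of $V_m$). Outside $U_m$ one has $W_m=F^{-1}(V_m)$, so $P(W_m)\le Q(V_m)+P(U_m)\le 2^{-m+1}$. A $P$-random $\alpha$ satisfies $\alpha\notin U_m$ for all $m>d_U(\alpha)$, whence $\alpha\in W_m$ for such $m$; then $\alpha\in\bigcap_m\bigcup_{j\ge m}W_j$, which, after an index shift, is a $P$-Martin-L\"of test rejecting $\alpha$ --- contradicting $P$-randomness.

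For (iii), the plan is to reduce the layerwise statement to the classical folklore ``no randomness from nothing'' for computable continuous maps (mentioned in the introduction). Introduce the joint distribution $R=(\mathrm{id},F)_*P$ on $\Omega\times\Omega$, supported on the graph of $F$; its computability on cylinders $R(x\Omega\times y\Omega)=P(x\Omega\cap F^{-1}(y\Omega))$ follows exactly as in (i). The second-coordinate marginal is $Q$, so the classical theorem applied to the continuous computable projection $\pi_2\colon\Omega\times\Omega\to\Omega$ gives, for every $Q$-random $\beta$, an $R$-random pair $(\alpha,\beta)$. Now define $G_k=\{(\omega,\beta'):\omega\notin U_k\text{ and }M_k(\omega)=\beta'\}$; its complement equals $\{(\omega,\beta'):\omega\in U_k\}\cup\{(\omega,\beta'):\exists i,\,M_k(\omega)_i\ne\beta'_i\}$, which is $\Sigma^0_1$ (on $\omega\notin U_k$ the machine $M_k$ is total, so disagreement of the two sequences at some position is semidecidable). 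Since $G_k$ coincides with $\{(\omega,F(\omega)):\omega\notin U_k\}$ and so has $R$-measure $P(\Omega\setminus U_k)\ge 1-2^{-k}$, the complements $\Omega^2\setminus G_k$ form an $R$-Martin-L\"of test. The $R$-random pair $(\alpha,\beta)$ escapes this test at some $k$, giving $\alpha\notin U_k$ and $F(\alpha)=M_k(\alpha)=\beta$. Conservation of randomness under the continuous computable projection $\pi_1$ then ensures $\alpha$ is $P$-random.

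The main obstacle is (iii). The natural attempt --- paralleling (ii) by taking complements of images $F(\Omega\setminus(V_k^*\cup U_k))$, with $\{V_k^*\}$ the universal $P$-test, as a candidate $Q$-Martin-L\"of test --- fails, because the image of a $\Pi^0_1$ subset of Cantor space under a computable continuous map is not in general effectively closed (its complement is $\Pi^0_2$ at best), so this would not yield a genuine effectively open $Q$-test. The joint-distribution device sidesteps this by replacing the layerwise map with a continuous projection, at which point the classical folklore theorem takes over.
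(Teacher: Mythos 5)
Your parts (i) and (ii) are correct and essentially follow the paper's route: for (i) the paper computes $Q$ via the machine characterization rather than via the clopen approximations of Proposition~\ref{pr:points}, a cosmetic difference, and your (ii) is the paper's pullback-and-split argument ($P(W_m)\le P(U_m)+Q(V_m)$) in slightly different packaging (you take tails of unions where the paper adds $U_i$ to $F_i^{-1}(V_i)$; either way one gets a test after an index shift). Part (iii) is where you genuinely diverge. The paper proves it directly: the complements $R_i$ of the universal test are effectively closed sets on which $F$ is computed by a single machine, their images $F(R_i)$ are shown to be \emph{uniformly effectively closed} of $Q$-measure at least $1-2^{-i}$ by an effective compactness argument, and a $Q$-random $\beta$ must land in some $F(R_i)$. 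You instead push $P$ forward along $(\mathrm{id},F)$ to a computable measure $R$ on the graph, invoke the classical no-randomness-from-nothing theorem for the total computable projection $\pi_2$ to lift $\beta$ to an $R$-random pair, and then use the tests $G_k$ to force that pair onto the graph of $F$; the $G_k$ device is correct and is a nice way to trade the layerwise map for a continuous one.

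However, your closing justification contains a genuine error, and it undercuts the reduction. The image of an effectively closed ($\Pi^0_1$) subset of Cantor space under a map that is computable on it \emph{is} effectively closed, uniformly: effectively closed subsets of $\Omega$ are effectively compact (one can semi-decide whether a finite union of cylinders covers such a set), and a computable image of an effectively compact set is effectively compact. This is exactly the ``effective version of the argument that a continuous image of a compact set is compact'' on which the paper's proof of (iii) rests --- the ``natural attempt'' you dismiss is the paper's proof, and it works. Moreover, the standard proof of the classical folklore theorem you invoke as a black box for $\pi_2$ runs through this very same compactness argument, so the hard content of (iii) has not been eliminated, only relocated into an uncited lemma that you have declared unprovable. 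To make your proof self-contained you should either supply that argument for $\pi_2$ (where it is especially easy: $\beta\notin\pi_2(C)$ for effectively closed $C$ iff some cylinder $\Omega\times y\Omega$ with $y$ a prefix of $\beta$ is disjoint from $C$, a semidecidable condition by compactness) or apply it directly to $F$ restricted to the sets $R_i$, which is what the paper does.
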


The first statement (i) is needed to make the notion of $Q$-randomness well defined. The second statement is \bl{the} randomness conservation property, and the third statement says that randomness does not ``appear from nothing''.

\begin{proof}
\textup{\textbf{(i)}} According to the definition, we need to compute the probability of the event ``$F(\omega)$ starts with $u$'' for arbitrary bit string $u$ with arbitrary given precision $\varepsilon>0$. The probability here is taken over $P$-distribution on input sequence $\omega$. Assume that $u$ consists of $k$ bits. Using the machine characterization, we can computably find how many steps are needed to finalize each of $k$ first output bits \bl{with probability of later change at most $\varepsilon/k$}. Then we take \bl{the} maximal of these numbers:
      $$
N=\max(N(0,\varepsilon/k), N(1,\varepsilon/k),\ldots,N(k-1,\varepsilon/k))
     $$
Then the probability in question is $\varepsilon$-close to the same probability for \bl{the} time-bounded computation stopped after $N$ steps, and this probability can be computed by simulating the machine behavior on all possible inputs.

\textup{\textbf{(ii)}} Assuming that $\beta=F(\alpha)$ is not $Q$-random for some $P$-random $\alpha$, we need to get a contradiction. Consider a Martin-L\"of test $U_1\supset U_2\supset\ldots$ provided by the layerwise computability of $F$, and a Martin-L\"of test $V_1\supset V_2\supset\ldots$ that rejects $\beta$. Using them, we construct a test that rejects $\alpha$. By definition, there exists a computable mapping $F_i$ that coincides with $F$ outside $U_i$.  Let us consider $F_i^{-1}(V_i)$. It requires some care since $V_i$ is an open set in the Cantor space of infinite sequences, while the values of $F_i$ may be finite. Still we may represent $V_i$ as a union of intervals $x_{i,0}\Omega\cup x_{i,1}\Omega \cup x_{i,2}\Omega\cup\ldots$, and consider all $\omega$ such that $F_i(\omega)$ has one of the strings $x_{i,0}, x_{i,1},x_{i,2},\ldots$ as a prefix. In this way we get an effectively open set that is denoted $F_i^{-1}(V_i)$ in the sequel. This notation is formally incorrect since the set depends not only on $V_i$, but also on its representation as the union of intervals, but this will not create a problem.

The set $F_i^{-1}(V_i)$ consists of two parts: sequences $\omega\in U_i$, and sequences $\omega\notin U_i$. The first part is small since $U_i$ is small; the second part is small since $F_i$ coincides with $F$ outside $U_i$, so the second part is contained in $F^{-1}(V_i)$. So the measures of both parts are bounded by $2^{-i}$, and we get at most $2\cdot 2^{-i}$ in total. Now we add $U_i$ to $F_i^{-1}(V_i)$ and get an effectively open set of measure $O(2^{-i})$ that covers $\alpha$ both for the case $\alpha\in U_i$ and for the case $\alpha\notin U_i$. Since this can be done for arbitrary $i$, the sequence $\alpha$ is not random.

\textup{\textbf{(iii)}} Let $\beta$ be a $Q$-random sequence. We want to show that $\beta$ belongs to the image $F(R)$ where $R$ is the set of all $P$-random sequences. Martin-L\"of's definition of randomness guarantees that the complement of $R$ can be represented as the intersection of a decreasing sequence of effectively open sets from the universal Martin-L\"of test, so $R$ can be presented as \bl{the} union of an increasing sequence of their complements, i.e., a sequence 
$
R_1\subset R_2 \subset R_3\subset\ldots,
$
of effectively closed sets where $P$-measure of $R_i$ is at least $1-2^{-i}$. The mapping $F$ is defined everywhere on $R$, so it is defined everywhere on $R_i$. Moreover, since we started from the \emph{universal} Martin-L\"of test, the definition of layerwise computability guarantees that on $R_i$ the mapping $F$ may be computed by a machine with \bl{write-only output tape,} so $F$ is continuous on $R_i$. The set $R_i$ is compact (being a closed set in a compact space), so its image $F(R_i)$ is compact and therefore closed. The $Q$-measure of $F(R_i)$ is at least $1-2^{-i}$, since the preimage $F^{-1}(F(R_i))$ of $F(R_i)$ contains $R_i$. 

It remains to show that the sets $F(R_i)$ are uniformly effectively closed (i.e., their complements are uniformly effectively open). Indeed, in this case their intersection contains only non-$Q$-random sequences, so the sequence $\beta$ belongs to some $F(R_i)$ and is therefore an image of a (random) sequence from $R_i$. 

For that we may use the effective version of an argument proving that an image of a compact set is compact. We enumerate intervals $V$ that are disjoint with $F(R_i)$ in such a way that their union is the complement of $F(R_i)$. For that, we look for a tuple $(U_1,\ldots, U_k)$ of  intervals and an interval $V$ such that (a)~$V$ is disjoint with all $U_i$; 
 (b)~ $F^{-1}(U_1), F^{-1}(U_2),\ldots, F^{-1}(U_k)$
cover $R_i$. The conditions (a) and (b) guarantee that $V$ is disjoint with $F(R_i)$: the image of every element in $R_i$ belongs to one of the $U_i$ and therefore does not belong to $V$. Note that we can replace $F$ by $F_i$ in (b) and get an equivalent condition, since $F=F_i$ on $R_i$, so a cover remains a cover.  Using this observation, we can enumerate all the tuples $U_1,\ldots,U_k,V$ (since the complement of $R_i$ and $F_i^{-1}(U_1),\ldots,F^{-1}_i(U_k)$ are effectively open and $\Omega$ is compact, we can look for cases when some intervals inside the preimages together with some intervals inside the complement of $R_i$ cover the entire $\Omega$). When such a tuple $U_1,\ldots,U_k,V$ is found, we include the interval $V$ in the enumeration. It does not intersect $F(R_i)$, as we have seen; the standard compactness argument shows that every point outside $F(R_i)$ is covered by one of those intervals $V$, so $F(R_i)$ indeed has an effectively open complement. This construction is uniform in $i$.
\end{proof}

\section{Remarks and enhancements}
\label{conclusion}

\subsection{Image randomness and van Lambalgen theorem}

Every sequence $\alpha=\alpha_0\alpha_1\alpha_2\alpha_3\ldots$ can be split into two sequences
$\alpha^\mathrm{e}=\alpha_0\alpha_2\ldots$ and 
$\alpha^\mathrm{o}=\alpha_1\alpha_3\ldots$
by separating its even and odd terms. The mapping $\alpha\mapsto (\alpha^\mathrm{e},\alpha^\mathrm{o})$ provides an isomorphism between the Cantor space $\Omega$ and its square $\Omega\times\Omega$, where $\Omega$ is considered as probability space equipped with uniform Bernoulli distribution.  Applying Theorem~\ref{th:main} to the function $\alpha\mapsto\alpha^\mathrm{e}$ (the composition of our isomorphism and the projection function), we see that
\begin{itemize}
\item if $\alpha$ is random, then $\alpha^\mathrm{e}$ is random;
\item every random sequence $\beta$ is equal to $\alpha^\mathrm{e}$ for some random $\alpha$.
\end{itemize}
The second statement means that for every random sequence $\alpha^\mathrm{e}$ we can also find some $\alpha^\mathrm{o}$ in such a way that together they make some random sequence $\alpha$. One may ask which sequences $\alpha^\mathrm{o}$ can be used to achieve this. The answer is given by the well known \emph{van Lambalgen theorem} that says that $\alpha$ is random if and only if $\alpha^\mathrm{e}$ is random and $\alpha^\mathrm{o}$ is random \emph{with respect to the oracle $\alpha^\mathrm{e}$}. The oracle randomness is defined in the usual way: the sets of small measure that form the test should be effectively open with the oracle (the enumerating algorithm may consult the oracle).

In general, we have two computable distributions $P_1$ and $P_2$ on two copies of $\Omega$ and consider randomness with respect to their product $P_1\times P_2$. The van Lambalgen theorem says that \bl{a} pair $(\alpha_1,\alpha_2)$ is $(P_1\times P_2)$-random if and only  if $\alpha_1$ is $P_1$-random and $\alpha_2$ is $P_2$-random with respect to oracle $\alpha_1$.  It implies that for every $P_1$-random sequence $\alpha_1$ there exists some $\alpha_2$ that makes the pair $(\alpha_1,\alpha_2)$ random with respect to $P_1\times P_2$. This (much weaker) statement is also a direct corollary of Theorem~\ref{th:main}. The advantage of using Theorem~\ref{th:main} is that it can be applied to every computable distribution $P$ on $\Omega\times\Omega$, not necessarily the product distribution: if a pair $(\alpha_1,\alpha_2)$ is $P$-random, then $\alpha_1$ is random with respect to the marginal distribution $P_1$; if $\alpha_1$ is $P_1$-random, then $(\alpha_1,\alpha_2)$ is  $P$-random \emph{for some $\alpha_2$}.

One can ask for which $\alpha_2$ this happens, i.e., how one can generalize van Lambalgen's theorem to non-product distributions. There are some results in this direction~\cite{vovk,takahashi}, but we do not go into details here; let us only note that this happens for almost all $\alpha_2$ with respect to the conditional distribution defined according to Takahashi~\cite{takahashi}.

\subsection{Quantitative version}

As many other results about algorithmic randomness, Theorem~\ref{th:main} has a quantitative version. Recall that \emph{randomness deficiency} of a sequence $\omega$ may be defined as the maximal number $i$ such that $\omega\in U_i$, where $U_1\supset U_2\supset U_2\supset \ldots$ is \bl{the} universal Martin-L\"of test {(see~\cite{gacs,usv}; this version of randomness deficiency is called \emph{probability-bounded} deficiency there). Using this notion, we can state the promised quantitative version.

Let $P$ be a computable distribution on $\Omega$, let $F$ be an $\Omega$-valued layerwise computable mapping defined on $P$-random sequences, and let $Q$ be the $F$-image of $P$.

\begin{theorem}\label{th:probability-bounded-def}
$$d_Q(\beta)=\inf \{ d_P(\alpha) \mid F(\alpha)=\beta\}+O(1).$$
\end{theorem}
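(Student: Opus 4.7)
The plan is to extract quantitative content from the two halves of Theorem~\ref{th:main}, proving separately the inequalities
$$
d_Q(\beta) \le \inf\{d_P(\alpha) : F(\alpha)=\beta\}+O(1)
$$
(quantitative conservation) and
$$
\inf\{d_P(\alpha) : F(\alpha)=\beta\} \le d_Q(\beta)+O(1)
$$
(quantitative no-randomness-from-nothing). Throughout, I fix universal Martin-L\"of tests $\{U_i\}$ for $P$ and $\{V_i\}$ for $Q$, so that $d_P(\alpha)$ is the largest $i$ with $\alpha\in U_i$ and similarly for $d_Q$.

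For the first inequality, I reread the proof of part (ii). Starting from the test $\{U^{\mathrm{lw}}_i\}$ and partial machines $F_i$ witnessing layerwise computability of $F$, I set
$$
W_i \;=\; U^{\mathrm{lw}}_i \cup F_i^{-1}(V_i).
$$
That proof establishes that the $W_i$ are uniformly effectively open with $P(W_i)\le 2\cdot 2^{-i}$; hence $\{W_{i+1}\}$ is a $P$-Martin-L\"of test and, by universality, $W_{i+1}\subseteq U_{i-c}$ for some constant $c$ and all $i$. Now fix any preimage $\alpha$ of $\beta$. If $\beta\in V_i$, then either $\alpha\in U^{\mathrm{lw}}_i$, or $\alpha\notin U^{\mathrm{lw}}_i$ and $F_i(\alpha)=F(\alpha)=\beta\in V_i$; in either case $\alpha\in W_i$, hence $\alpha\in U_{i-c-1}$. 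Specializing to $i=d_Q(\beta)$ gives $d_P(\alpha)\ge d_Q(\beta)-c-1$, and taking the infimum over preimages yields the claim.

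For the reverse inequality, I reuse the compactness construction from part (iii). Let $R_i=\Omega\setminus U_i$; part (iii) shows that $F(R_i)$ is uniformly effectively closed, and since $F^{-1}(F(R_i))\supseteq R_i$ we have $Q(F(R_i))\ge 1-2^{-i}$. Thus the complements $T_i:=\Omega\setminus F(R_i)$ form a uniformly effectively open sequence of $Q$-measure $\le 2^{-i}$, i.e., a $Q$-Martin-L\"of test; by universality $T_i\subseteq V_{i-c'}$ for some constant $c'$. Setting $j=d_Q(\beta)$ and observing $\beta\notin V_{j+1}$, we conclude $\beta\notin T_{j+c'+1}$, so $\beta\in F(R_{j+c'+1})$; any witness $\alpha\in R_{j+c'+1}$ with $F(\alpha)=\beta$ satisfies $\alpha\notin U_{j+c'+1}$ and hence $d_P(\alpha)\le j+c'$, giving the required bound.

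I do not expect any genuine obstacle beyond careful bookkeeping. The only points to verify are that the constructions of $W_i$ and $F(R_i)$ really are uniform in $i$ (so the shifted families qualify as Martin-L\"of tests) and that the linear additive shifts can be absorbed into a single $O(1)$ via the universality of $\{U_i\}$ and $\{V_i\}$; the substance of the argument is already contained in the qualitative proof of Theorem~\ref{th:main}.
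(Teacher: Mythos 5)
Your proposal is correct and follows essentially the same route as the paper: both directions are obtained by extracting the quantitative content of the proof of Theorem~\ref{th:main}, using the test $W_i=U_i\cup F_i^{-1}(V_i)$ for randomness conservation and the $Q$-test formed by the complements of the sets $F(R_i)$ for the reverse inequality. Your version merely makes the constant-tracking and the appeal to universality more explicit than the paper does.
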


Here $d_P$ and $d_Q$ stand for the randomness deficiency functions for distributions $P$ and $Q$; the constant in $O(1)$ depends on the choice of deficiency functions and on $F$. Note that this result directly implies Theorem~\ref{th:main} (recall that the infimum of \bl{the} empty set is $+\infty$).

\begin{proof}
For the proof we should just look closely at the argument we used. First, we have shown  by contraposition that an image $F(\alpha)$ for $P$-random $\alpha$ is $Q$-random. Now we need to show that if the deficiency of $F(\alpha)$ is high, then the deficiency of $\alpha$ is high. Indeed, if $\beta=F(\alpha)$ is covered by $V_i$ (in the notation used to prove Theorem~\ref{th:main}), then $\alpha$ is covered by the set $W_i=U_i\cup F_i^{-1}(V_i)$ whose measure is $O(2^{-i})$. The sets $W_i$ also form a Martin-L\"of test (up to a $O(1)$-change in numbering), so 
$$
d_Q(F(\alpha))\le d_P(\alpha)+O(1)
$$
(the quantitative version of randomness conservation).

To prove the reverse inequality, note that the complements of the sets $F(R_i)$ form a $Q$-test. So if $d_Q(\beta)$ is small and $i$ is large, the sequence $\beta$ belongs to $F(R_i)$ and not to its complement. More formally, $\beta$ belongs to $F(R_i)$ for $i$ that is only $O(1)$-larger than $d_Q(\beta)$, and all elements in $F(R_i)$ have preimages in $R_i$ whose deficiency is bounded by~$i$.
\end{proof}

\subsection{Quantitative version for expectation-bounded deficiency}

There is another version of randomness deficiency introduced by Levin. This version is called in~\cite{gacs,usv} the \emph{expectation-bounded} deficiency,  while the original Martin-L\"of version (described above) is called \emph{probability-bounded}. In this version $d_P(\cdot)$ is defined as the logarithm of the maximal (up to a $O(1)$-factor) non-negative lower semicomputable function with finite integral with respect to a given distribution (see~\cite{gacs,usv} for details).  These two versions of randomness deficiency differ more than by a constant (the probability-bounded one is bigger, and the difference is unbounded), but the formula of Theorem~\ref{th:probability-bounded-def} is true also for the expectation-bounded deficiency. Let us prove this (assuming that the reader is familar with the basic techniques from~\cite{gacs}).

Let us show first that the randomness conservation property $d_Q(F(\alpha))\le d_P(\alpha)+O(1)$ for this version of the randomness deficiency. Note that if $\alpha$ is not random, $F(\alpha)$ is undefined and $d_P(\alpha)$ is infinite, so the claim makes sense only if $\alpha$ is $P$-random, and $F(\alpha)$ is defined for every $P$-random $\alpha$. Let $t_Q(\cdot)$ be the maximal lower semicomputable integrable function (the universal randomness test) for distribution $Q$, so $d_Q(\cdot)=\log t_Q(\cdot)$. We assume that $\int t_Q(\beta)\,dQ(\beta)\le 1$. We may try to consider a function $t(\alpha)=t_Q(F(\alpha))$ and show that it is a $P$-test and therefore is bounded by the universal $P$-test (so we get the inequality we want to prove). However, this plan does not work ``as is'', because the function $t(\alpha)$ is well defined only for random $\alpha$ and there is no reason to expect that $t$ can be extended to a lower semicomputable function on the entire $\Omega$ (though the other requirement for tests, bounded integral, is true for~$t$).

To overcome this difficulty, we recall that $F$ is a layerwise computable mapping. We may assume that $F$ coincides with computable $F_i$ outside some effectively open set $U_i$ whose $P$-measure is at most $2^{-i}$. Now we may consider a function
$$
t_i(\alpha)=t_Q(F_i(\alpha))
$$
defined on random $\alpha$ and, moreover, on all $\alpha$ where $F_i(\alpha)$ is infinite.  It is easy to see that $t_i$ can be naturally extended to a lower semicomputable function defined on the entire $\Omega$. We denote this extension also by~$t_i$. Now the problem is that  there is no reason to expect that $\int_{\Omega} t_i(\alpha)\, dP(\alpha)$ is finite; the only thing we are sure about is that $\int_{\Omega\setminus U_i} t_i(\alpha)\, dP(\alpha)\le1.$ This happens because $t_i=t$ outside $U_i$, and for $t$ the similar inequality is true (recall that $Q$ is the image of $P$).

So we take \bl{the} next step and consider the function $t_i'$ defined as $t_{2i}$ artificially cut at $2^i$ (all bigger values are replaced by $2^i$). It is still lower semicomputable, but in this way we guarantee that the integral of $t_i'$ inside $U_{2i}$ does not exceed $2^{-2i}\times 2^i=2^{-i}$. Let $t'(\alpha)=\sup_i t_i'(\alpha)$. The function $t'$ is lower semicomputable since all $t_i'$ are uniformly lower semicomputable. To get the bound for the integral, let us consider $t^N(\alpha)=\sup_{i\le N} t_i'(\alpha)$. When can $t^N(\alpha)$ exceed $t(\alpha)$ for a random $\alpha$? If this happens because of some $t_i'(\alpha)$, then $\alpha$ belongs to $U_{2i}$, and the excess is bounded by a function that equals $2^i$ in $U_{2i}$ and equals $0$ outside $U_{2i}$. The integral of this function is at most $2^{-i}$, and the total excess is bounded by the sum of these integrals, so it is bounded by $1$. We conclude that $\int t'(\alpha)\,dP(\alpha)\le 2$ (since $\int t(\alpha)\,dP(\alpha)\le 1$ and the excess is bounded by $1$), so $\log t'(\alpha)\le d_P(\alpha)+O(1)$. It remains to note that $t(\alpha)=t_Q(F(\alpha))\le t'(\alpha)$ for random $\alpha$, since $\alpha\notin U_{2i}$ for large $i$, and for those $i$ we have $t_i'(\alpha)=\min(t(\alpha),2^i)$, so the supremum over $i$ is at least $t(\alpha)$.
\smallskip

Now the reverse inequality. Consider the universal test $t_P(\cdot)$ for distribution $P$, i.e., a maximal non-negative $P$-integrable lower semicomputable function; its logarithm is $d_P(\cdot)$. Now consider the function
      $$u(\beta)=\inf\{t_P(\alpha) \mid F(\alpha)=\beta\},$$
where the infimum is taken over random $\alpha$ (otherwise $F(\alpha)$ is not defined). If $\alpha$ is \bl{a} $P$-distributed random variable, then $\beta=F(\alpha)$ is \bl{a} $Q$-distributed random variable, and $u(F(\alpha))\le t_P(\alpha)$, so 
     $$\int u(\beta)\,dQ(\beta)=\int u(F(\alpha))\,dP(\alpha)\le \int t_P(\alpha)\,dP(\alpha) \le 1$$
(assuming that $u$ is measurable, otherwise the integral may be not well defined).
     
It remains to prove that $u$ is lower semicomputable. Indeed, in this case the integral we consider is well defined, the function $u$ is a $Q$-test, so $\log u(\beta)\le d_Q(\beta)$, and that is exactly what we need. To prove the lower semicomputability of $u$ let us recall the definition. We need to prove that the set $\{\beta \mid u(\beta)>r\}$ is uniformly effectively open for every rational $r$. Note that for this set all the values of $u$ greater than $r$ are equivalent, so it remains unchanged of we let $t_P$ be infinite on $U_i$ for some very large $i$ (where $t_P(\alpha)$ is large anyway). Here $U_i$ is the effectively open set of small measure provided by the definition of layerwise computability. After that we may change $F$ on $U_i$ making it a total computable mapping, i.e., a mapping that outputs infinite sequences on all inputs (as soon as the input of $F$ is covered by $U_i$, we extend the output arbitrarily, e.g., by zeros). Then we may apply the standard reasoning for the case of a total computable mapping; here it~is.

We have $u(\beta)=\inf\{t_P(\alpha) \mid F(\alpha)=\beta\}>r$ if and only if there exists some rational $s>r$ such that $t_P(\alpha)>s$ for all $\alpha$ such that $F(\alpha)=\beta$. We assume now that $F$ is total (as discussed above), and consider all $\alpha$, not only random ones, since $t_P(\alpha)$ is infinite for all non-random~$\alpha$. The condition ``$t_P(\alpha)>s$ for all $\alpha$ such that $F(\alpha)=\beta$'' means that $\beta$ does not belong to $F(\{\alpha\mid t_P(\alpha)\le s\})$. The latter set is an image of an effectively closed set under a total computable mapping, so (as we have seen in the proof of Theorem~\ref{th:main}) it is an effectively closed set, and its complement is effectively open. The construction is uniform, so we get what we wanted.

\subsection{An application to random closed sets}

Now we want to present an example where the generalization to layerwise computable mappings is important (i.e., the result for computable mappings is not enough). Let us consider two definitions of randomness for trees and show that they are equivalent.
\smallskip

Consider the following two random processes:

\begin{enumerate}

\item Take the full binary tree. Cut independently each edge with probability $1/3$. Then consider the connected component of the root in the remaining graph. It is a binary tree that may be finite or infinite (as we will see, the first case happens with probability $1/4$).  Delete all vertices that have finitely many descendants. If the connected component of the root was finite, this gives an empty tree, otherwise we get an infinite tree without leaves.

\item Consider three possible elements
\begin{center}
\includegraphics{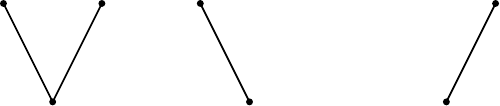}
\end{center}
Select one of them randomly (all three have probability $1/3$) and place it at the root of the tree. Then at every vertex of level $1$ (there can be one or two, depending on the root choice) select one of three elements independently (using fresh randomness), and so on.

\end{enumerate}

Both random processes define a probability distribution on the set of infinite subtrees of binary tree that have no leaves; in the first case we should consider the conditional distribution under the condition ``process generates a non-empty tree''. As we will see, this multiplies all probabilities by $4/3$, since the probability to get an infinite tree is $3/4$.

It turns out that these two distributions on trees without leaves are the same. Moreover, an algorithmic version of the same statement is also true. Imagine that we use bits from a Martin-L\"of random sequence when the random process asks for fresh randomness. It turns out that we get the same class of trees (called Martin-L\"of random trees) in both cases. This was proven in~\cite{diamondstone2012} by some ad hoc construction; in this section we explain why this follows from Theorem~\ref{th:main}.

Let us first compare the distributions. First of all let us explain how the probabilities $1/4$ and $3/4$ (for finite and infinite trees) are obtained. Let $p_n$ be a probability to have a tree of height at least $n$ when cutting each edge of a full binary tree independently with probability $1/3$. The inductive definition of the process gives the recurrence:
$$
p_{n+1}=2q(1-q)p_n+q^2(2p_n-p_n^2)=-\tfrac{4}{9}p_n^2+\tfrac{4}{3}p_n
$$
where $q=2/3$ is the probability that a given edge exists (is not cut). The first term corresponds to two disjoint cases when one of the root's children exists and the other one is cut, the second term is the probability that both children exists and at least one of them has subtree of height at least $n$ (these events have probability $p_n$ and are independent). We start with $p_0=1$ and iterate the function $x\mapsto -\frac{4}{9}x^2+\frac{4}{3}x$; it is easy to see that these iterations converge to the fixed point $3/4$ exponentially fast. So the event ``tree is infinite after edges are randomly cut with probability $1/3$'', being the intersection of a decreasing sequence of events with probabilities $p_n$, has probability $3/4$.

Knowing this, we may compute the probabilities of three possibilities shown above. The first case (where the root has two sons in the final tree) has probability $(2/3)^2(3/4)^2=1/4$, the second and the third cases have probability $$(2/3)(3/4)(1-(2/3)(3/4))=1/4$$ each. So all three cases have the same probability $1/4$ and conditional probability $1/3$, and the same is true at every vertex. So two distributions described above are the same.

Now we switch to algorithmic randomness. First of all, we need to fix the representation of trees by bit sequences. A tree without leaves can be considered as a set of vertices of a full binary tree (that contains with every vertex $x$ its father and one of its sons). Let $\{v_0,v_1,v_2,\ldots\}$ be the vertices of the full binary tree (=binary strings). Then each tree without leaves can be represented as a sequence of bits: $i$th bit is $1$ if $v_i$ belongs to the tree. 

We start with the second random process (in the list above). It uses uniform random choice between three possibilities. So we consider ternary sequences (Martin-L\"of randomness can be defined for any finite alphabet in the same way, and all the results can be easily extended to this case), and \bl{the} uniform distribution on $\{0,1,2\}^\infty$ where each term is uniformly distributed (each of three possible values has probability $1/3$) and the terms are independent. Then for each sequence $\omega\in\{0,1,2\}$ we construct a tree without leaves as described (using terms of the sequence to make choice between three extensions). In this way we get a total computable mapping $\{0,1,2\}^\infty\to\Omega$. Then we consider the image of the uniform distribution on $\{0,1,2\}^\infty$ and get the image distribution on sequences representing trees (and therefore on trees). Random sequences with respect to this image distribution are exactly the images of Martin-L\"of random sequences in $\{0,1,2\}^\infty$ due to Theorem~\ref{th:main}; corresponding trees are called (Martin-L\"of) random trees. 

In fact, this description can be understood in two ways. We may use the terms of a ternary sequence to make choices in all vertices in advance (and then some choices do not matter since the vertex does not appear in the tree). Or we may use the terms of a ternary sequence only for the choices that really matter (starting from the root and then adding layers one by one); in the latter case the same term of the sequence may be used in different vertices depending on the preceding terms. These two possibilities give different mappings $\{0,1,2\}^\infty\to\Omega$. However, standard probability theory arguments show that the image distributions are the same. As we know, this implies that the image of the set of random ternary sequences is the same.

The first random process (generating trees by cutting edges) is more complicated, and layerwise computable mappings appear. We start with a binary sequence (generated according to $B_{1/3}$). Then we cut the edges according to the terms of this binary sequence, and get a tree without leaves or the empty set of vertices (that corresponds to a zero sequence when we encode trees by binary sequences). In this way we get a mapping $\Omega\to\Omega$. 

This mapping is layerwise computable. To prove this, consider a rewriting machine that constructs a tree using random input bits to cut the edges of the full binary tree. This machine starts from the root and goes upwards, but when a vertex $x$ with a finite subtree above $x$ is discovered, this subtree (including $x$ itself) is deleted. So there is always a possibility that some part of the tree, or the entire tree, will be deleted in the future. However, the probability of the event ``the root survived up to height $n$, but later was deleted'' is bounded by $p_n-\lim p_n$ (in fact, is equal to it) and computably converges to zero. Similar statement is true for every vertex. So the mapping is layerwise computable.

What is the image distribution (of $B_{1/3}$)? As we have seen, it has an atom (zero sequence, or empty tree) that has probability $1/4$, and the rest is the previous distribution on trees multiplied by $3/4$. It is easy to see that random sequences are the same (plus a zero sequence) as in the first case. It remains to apply Theorem~\ref{th:main}.

\subsection{Many-valued mappings}

We considered a random process that consists of two parts: a source of random bits (with some distribution $P$) that generates some bit sequence $\alpha$, and some machine $M$ that transforms these bits (the sequence $\alpha$) into an output sequence $\beta$. Now we want to consider a more general situation when the transformation is non-deterministic. We consider $M$ as a black box, and the only thing we know is some relation between its input $\alpha$ and output $\beta$. Formally, we have some closed subset $F$ of $\Omega\times\Omega$, and we know that $\langle\alpha,\beta\rangle$ always belongs to $F$.

The question remains the same: which sequences are plausible as outputs of such a device? For which $\beta$ there exists some $P$-random sequence $\alpha$ such that $\langle \alpha,\beta\rangle\in F$? 

Let us give two motivating examples for this scheme. First, consider a bit sequence that is Martin-L\"of random with respect to $B_p$ (independent bits, each bit equals $1$ with probability $p$). Then replace some ones by zeros. Which sequences can be obtained by this two-stage process? These sequences (correspoding sets) were called \emph{$p$-sparse} in~\cite{2008-sparse}. In our language, we consider $B_p$ as $P$ (source distribution), and coordinate-wise ordering as $F$: $$F(\alpha_0\alpha_1\ldots,\beta_0\beta_1\ldots)\Leftrightarrow \forall i\, (\beta_i\le \alpha_i).$$

Another example: let $\alpha$ be a random tree (in the sense described above) and let $F(\alpha,\beta)$ mean that $\beta$ is a path in $\alpha$. Then we get the question: which sequences may appear as paths in random trees? This question was studied in \cite{cenzeretal,diamondstone2012,miller-rute}; finally these sequences were characterized in terms of continuous a priori probability. 

Let us return to the general question mentioned above, for arbitrary measure $P$ and closed set $F\subset\Omega\times\Omega$. Some answer can be provided in terms of \emph{class randomness} (see~\cite{gacs} for the definition and the basic properties; we assume that the reader is familiar with this notion). Consider some effectively closed set $\mathcal{L}$ in the effective metric space of all distributions on the Cantor space. An \emph{expectation-bounded class test with respect to $\mathcal{L}$} is a lower semicomputable function $t(\cdot)$ with non-negative values defined on the Cantor space such that the integral of $t$ \emph{with respect to any distribution in $\mathcal{L}$} does not exceed~$1$.   For every effectively closed class $\mathcal{L}$ there exists a maximal (up to a constant factor) test $t_\mathcal{L}$. The sequences $\omega$ such that $t_\mathcal{L}(\omega)$ is finite are called \emph{random with respect to class $\mathcal{L}$}, and $d_\mathcal{L}(\omega)=\log t_\mathcal{L}(\omega)$ is called the \emph{expectation-bounded randomness deficiency of $\omega$ with respect to $\mathcal{L}$}. If $\mathcal{L}$ is a singleton, its only element is a computable distribution and $\mathcal{L}$-randomness is equivalent to Martin-L\"of randomness. In the general case, a sequence $\omega$ is $\mathcal{L}$-random if and only if it is uniformly random with respect to some distribution $Q\in\mathcal{L}$ (this distribution may not be computable, so a notion of randomness with respect to non-computable distributions is needed, see~\cite{gacs}). 

The notion of class randomness can be equivalently defined in terms of Martin-L\"of class tests: a decreasing sequence $U_1\supset U_2\supset\ldots$ is a $\mathcal{L}$-test if $Q(U_i)\le 2^{-i}$ for every $Q\in\mathcal{L}$. For each test a deficiency function is introduced whose value on $\omega$ is $\sup\{i\mid \omega\in U_i\}$; there exist universal tests for which the deficiency function is maximal, and this maximal function is called the \emph{probability-bounded randomness deficiency with respect to $\mathcal{L}$}. (We can also use probability-bounded tests defined as lower semicomputable functions $t$ such that for every $c$ the event $t(\omega)>c$ has probability at most $1/c$ with respect to every distribution in $\mathcal{L}$; this gives an equivalent definition of probability-bounded deficiency.)

Now we can formulate the promised answer about sequences $\beta$ such that $F(\alpha,\beta)$ for some $P$-random $\alpha$, assuming that the relation $F$ satisfies some conditions. Namely, we assume that 

\begin{itemize}
\item[(i)] $F$ is an effectively closed subset of $\Omega\times\Omega$.
\item[(ii)] The projection of $F$ on the first coordinate is $\Omega$: for every $\alpha$ there exists some $\beta$ such that $F(\alpha,\beta)$.
\item[(iii)] The $F$-preimage of every interval $[y]$ in the Cantor space, i.e., the set of all $\alpha$ such that $F(\alpha,\beta)$ for some $\beta$ starting with $y$, is a clopen set (a finite union of intervals) that can be computed given $y$.
\end{itemize}
It is easy to check that all these conditions are satisfied in the two examples given above.

Recall that we consider some computable distribution $P$ on $\Omega$ (the first coordinate of the product $\Omega\times\Omega$). Consider the class $\mathcal{L}$ of all distributions $Q$ on the second copy of $\Omega$ that are \emph{$F$-couplable with $P$}. This means, by definition, that there exists some distribution $S$ on $\Omega\times\Omega$ concentrated inside $F$ (the complement of $F$ has $S$-measure $0$) such that the first projection (marginal distribution) of $S$ is $P$ and the second projection is $Q$.

\begin{proposition}\label{prop:non-empty}
This class $\mathcal{L}$ is a non-empty effectively closed class of distributions on~$\Omega$.
\end{proposition}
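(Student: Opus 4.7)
The proof splits into non-emptiness and effective closedness, which I will treat separately.

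For non-emptiness, my plan is to use condition (iii) to build a computable selection $f \colon \Omega \to \Omega$ with $(\alpha, f(\alpha)) \in F$ for every $\alpha$. Having constructed a prefix $y$ of $f(\alpha)$ of length $n$ with $\alpha \in F^{-1}[y]$ (which holds initially at the empty prefix by (ii)), I use (iii) to decidably test whether $\alpha$ lies in the clopen set $F^{-1}[y0]$; since $F^{-1}[y0] \cup F^{-1}[y1] = F^{-1}[y]$, at least one contains $\alpha$, and I extend $y$ by the corresponding bit. The resulting $f$ is continuous, and every prefix of $f(\alpha)$ extends to an element of the closed fiber $F_\alpha = \{\beta : (\alpha,\beta) \in F\}$, so $f(\alpha) \in F_\alpha$. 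Pushing $P$ forward by $\alpha \mapsto (\alpha, f(\alpha))$ then yields a probability measure $S$ supported on $F$ with first marginal $P$ and second marginal $Q := f_{*} P$, witnessing $Q \in \mathcal{L}$.

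For effective closedness, the plan is to realize $\mathcal{L}$ as a decreasing intersection of uniformly effectively closed sets $\mathcal{L}_n$ defined by finite-level transport feasibility. Say $Q \in \mathcal{L}_n$ iff there exist $s_{x,y} \ge 0$ for $x,y \in \{0,1\}^n$ with $\sum_y s_{x,y} = P([x])$, $\sum_x s_{x,y} = Q([y])$, and $s_{x,y} = 0$ whenever $[x] \cap F^{-1}[y] = \emptyset$. These zero constraints are decidable by (iii), so $\mathcal{L}_n$ is the image under a computable linear map of a computable compact polytope in $\mathbb{R}^{4^n}$, hence uniformly effectively closed in the effectively compact space of probability distributions on $\Omega$. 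Since an intersection of uniformly effectively closed sets is effectively closed, it remains only to justify the equality $\mathcal{L} = \bigcap_n \mathcal{L}_n$.

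The inclusion $\mathcal{L} \subseteq \bigcap_n \mathcal{L}_n$ is immediate by taking $s_{x,y} = S([x] \times [y])$ for any witness $S$. The reverse inclusion is what I expect to be the main obstacle. Given witnesses $(s^{(n)}_{x,y})$ at every level $n$, I lift each to a measure $S_n$ on $\Omega \times \Omega$ by distributing the mass $s^{(n)}_{x,y}$ inside $[x] \times [y]$ according to the product $P|_{[x]} \otimes Q|_{[y]}$, which makes the marginals of $S_n$ equal exactly to $P$ and $Q$. Weak compactness of probability measures on the compact space $\Omega \times \Omega$ yields a subsequential limit $S$ whose marginals are still $P$ and $Q$. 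To see $S$ is concentrated on $F$, take any basic cylinder $[x] \times [y]$ disjoint from $F$, equivalently $[x] \cap F^{-1}[y] = \emptyset$. Since $[x'] \cap F^{-1}[y'] \subseteq [x] \cap F^{-1}[y]$ for all refinements $x' \succeq x$, $y' \succeq y$, the zero condition propagates to every finer level, giving $S_n([x] \times [y]) = 0$ for all $n \ge \max(|x|,|y|)$; because $[x] \times [y]$ is clopen, this passes to the weak limit. Covering the open set $(\Omega \times \Omega) \setminus F$ by such cylinders gives $S(F) = 1$, so $S$ witnesses $Q \in \mathcal{L}$.
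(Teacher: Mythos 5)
Your proof is correct, but both halves take a genuinely different route from the paper's. For non-emptiness the paper does not build a selection function: it reduces, by compactness of the class of candidate couplings, to clopen supersets $F'\supset F$, writes $\Omega$ as a finite disjoint union of intervals $U_i$ admitting companion intervals $V_i$ with $U_i\times V_i\subset F'$, and patches together product measures on these rectangles; your explicit computable selector $f$ (which leans on hypothesis~(iii), whereas the paper's non-emptiness argument uses only (i) and (ii)) is more constructive and yields the slightly stronger conclusion that $\mathcal{L}$ contains the \emph{computable} distribution $f_*P$. For effective closedness the paper stays at the level of measures on $\Omega\times\Omega$: the conditions ``first marginal equals $P$'' and ``concentrated inside $F$'' each define effectively closed classes of couplings, and $\mathcal{L}$ is the image of their (effectively compact) intersection under the computable second-projection map, hence effectively closed. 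You instead give a finitary description of $\mathcal{L}$ as $\bigcap_n\mathcal{L}_n$ via level-$n$ transport polytopes and prove $\bigcap_n\mathcal{L}_n\subseteq\mathcal{L}$ by taking a weak-$*$ limit of lifted finite couplings, with the zero constraints propagating to refinements and surviving the limit because the forbidden rectangles are clopen; this trades the paper's soft ``image of an effectively compact class under a computable map'' lemma for an explicit marginal-feasibility characterization plus a compactness-of-measures argument. Both are sound. The only places where you are terse are (a)~the passage from effective compactness of the image polytope $K_n\subset\mathbb{R}^{2^n}$ to effective closedness of $\mathcal{L}_n=\{Q\mid (Q([y]))_{y}\in K_n\}$, which is the preimage of an effectively closed set under the computable map $Q\mapsto(Q([y]))_y$, and (b)~the normalization of the lifted measures $S_n$ on rectangles where $P([x])$ or $Q([y])$ vanishes; both points are routine to fill in.
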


\begin{proof}
The condition``the first projection of $S$ equals $P$'' is effectively closed (defines an effectively closed class of distributions $S$). The same can be said about the condition ``$S$ is concentrated inside $F$'' (the complement of $F$ is a union of a computable sequence of intervals in $\Omega\times\Omega$, and we require the $S$-measure of each of these intervals to be $0$, thus getting a sequence of uniformly effectively closed conditions). Let us check that these two conditions have non-empty intersection. By compactness, it is enough to consider some clopen $F'\supset F$ instead of $F$: if for each $F'$ the intersection is non-empty, then it is non-empty for $F$. The projection of $F'$ contains the projection of $F$ and therefore covers the entire $\Omega$; since $F'$ is clopen (a finite union of intervals), we can represent $\Omega$ as a disjoint union $\Omega=U_1+\ldots+U_n$ of intervals, and find some intervals $V_1,\ldots,V_n$ such that $U_i\times V_i\subset F'$ for every $i$. Then on each $U_i\times V_i$ we consider a distribution that is a product of $P$ restricted to $U_i$, and some computable distribution on $V_i$, and combine these products to get a distribution in the desired class (projection is $P$ and support is in $F'$).

It remains to note that this intersection is an effectively closed (and therefore compact) class of distributions on $\Omega\times\Omega$; the projection to the second coordinate is a computable continuous mapping, so its image is again an effectively closed non-empty class of distributions.
\end{proof}

This proposition allows us to consider the randomness deficiency function $d_\mathcal{L}$ for the class $\mathcal{L}$, and compare it with the randomness deficiency function $d_P$ for distribution $P$.

\begin{theorem}
Let $P$ be a computable probability distribution; let $F$ be a subset of $\Omega \times \Omega$ satisfying the conditions \textup{(i)--(iii)}, and let $\mathcal{L}$ be the class of distributions $Q$ which are $F$-couplable with~$P$. The following equality holds with $O(1)$-precision \textup(both for expectation- and probability-bounded versions of deficiency function\textup{):}
$$
d_\mathcal{L}(\beta)=\inf_\alpha \{ d_P(\alpha) \mid F(\alpha,\beta)\},
$$
\end{theorem}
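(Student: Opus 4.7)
The plan is to prove the two inequalities separately, mirroring the proof of Theorem~\ref{th:probability-bounded-def}: the coupling class $\mathcal{L}$ plays the role of the image measure $Q$, and $F$-couplings of $P$ with distributions in $\mathcal{L}$ replace the graph of the layerwise computable mapping. I sketch the expectation-bounded case; the probability-bounded version is an analogous adaptation at the level of Martin-L\"of tests.

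For the ``randomness from nothing'' inequality $\inf_\alpha\{d_P(\alpha)\mid F(\alpha,\beta)\}\le d_\mathcal{L}(\beta)+O(1)$, I would define
$$u(\beta)=\inf\{t_P(\alpha):F(\alpha,\beta)\}$$
and verify that $u$ is a class $\mathcal{L}$-test. Lower semicomputability comes from effective compactness of the fibers $F_\beta=\{\alpha:F(\alpha,\beta)\}$: the condition $u(\beta)>r$ is equivalent to $F_\beta\subset\{t_P>r\}$, which by compactness is an effectively open condition on $\beta$ (its complement is the second projection of a uniformly effectively closed subset of $F$). The integral bound is then immediate: for any $Q\in\mathcal{L}$ coupled to $P$ via $S$ supported on $F$,
$$\int u\,dQ=\int u(\beta)\,dS(\alpha,\beta)\le\int t_P(\alpha)\,dS=\int t_P\,dP\le 1,$$
because $u(\beta)\le t_P(\alpha)$ whenever $F(\alpha,\beta)$. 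Thus $u\le O(t_\mathcal{L})$ by maximality.

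For the ``conservation'' inequality $d_\mathcal{L}(\beta)\le\inf_\alpha\{d_P(\alpha)\mid F(\alpha,\beta)\}+O(1)$, I would dually define
$$v(\alpha)=\sup\{t_\mathcal{L}(\beta):F(\alpha,\beta)\}$$
and verify that $v$ is a $P$-test. Lower semicomputability is immediate from condition (iii), since $\{v>r\}=F^{-1}(\{t_\mathcal{L}>r\})$ and $F$-preimages of effectively open sets are effectively open by (iii). The integral bound $\int v\,dP\le 1$ is the key step; I would obtain it by showing
$$\int v\,dP\le\sup_{S}\int t_\mathcal{L}(\beta)\,dS(\alpha,\beta)=\sup_{Q\in\mathcal{L}}\int t_\mathcal{L}\,dQ\le 1,$$
with supremum over couplings $S$ of $P$ concentrated on $F$. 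To prove the leftmost inequality, for each truncation $M$ and each $\epsilon>0$ the Jankov--von Neumann selection theorem applied to the Borel set $\{(\alpha,\beta)\in F:t_\mathcal{L}(\beta)\ge\min(v(\alpha),M)-\epsilon\}$ (whose fibers over $\alpha$ are nonempty by definition of $v$) produces a measurable $\phi:\Omega\to\Omega$ with $F(\alpha,\phi(\alpha))$ and $t_\mathcal{L}(\phi(\alpha))\ge\min(v(\alpha),M)-\epsilon$; the second marginal $\phi_*P$ of the pushforward coupling lies in $\mathcal{L}$, so $\int\min(v,M)\,dP-\epsilon\le\int t_\mathcal{L}\circ\phi\,dP=\int t_\mathcal{L}\,d(\phi_*P)\le 1$. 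Letting $M\to\infty$, $\epsilon\to 0$ yields $\int v\,dP\le 1$, whence $v\le O(t_P)$.

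The main obstacle is this integral bound on $v$: the general setup and lower semicomputability are routine, but converting the pointwise supremum defining $v(\alpha)$ into an achievable coupling requires a genuine measurable selection argument, since the optimizing $\beta$ over each closed fiber of a lower semicomputable function cannot be chosen in any effectively continuous way. The probability-bounded version follows the same template: the conservation direction uses $W_i=F^{-1}(V_i)$, which is effectively open by (iii), with $P(W_i)=O(2^{-i})$ established by the analogous selection argument, and the randomness-from-nothing direction uses the effectively closed projections of the appropriate level sets of $t_P$.
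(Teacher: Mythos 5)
Your proposal is correct, and for the easier inequality it coincides with the paper's argument: the function $u(\beta)=\inf\{t_P(\alpha)\mid F(\alpha,\beta)\}$ is lower semicomputable because the fibers $F_\beta$ are (effectively) compact, and the integral bound over any $Q\in\mathcal{L}$ follows by integrating $u(\beta)\le t_P(\alpha)$ against a coupling supported on $F$. For the harder inequality you take a genuinely different route at the key step. The paper declines to work with the full supremum $v(\alpha)=\sup\{t_\mathcal{L}(\beta)\mid F(\alpha,\beta)\}$: it approximates $t_\mathcal{L}$ from below by basic functions $t^i_\mathcal{L}$ with finitely many values, so that $t^i(\alpha)=\max\{t^i_\mathcal{L}(\beta)\mid F(\alpha,\beta)\}$ is a computable clopen step function by condition (iii), and it then builds the required coupling explicitly: it covers $F$ by finitely many clopen rectangles on which $t^i(\alpha)\le t^i_\mathcal{L}(\beta)$ and applies the compactness construction of Proposition~\ref{prop:non-empty} to the resulting effectively closed subset of $F$, obtaining a distribution with first marginal $P$ whose second marginal witnesses $\int t^i\,dP\le 1$. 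You instead keep $v$ itself --- and your observation that condition (iii) already makes $v$ lower semicomputable, since $\{v>r\}$ is a countable union of computable clopen $F$-preimages of intervals, is correct and slightly sharper than the paper's cautionary remark that suprema need not be lower semicomputable --- and you obtain the coupling from the Jankov--von Neumann uniformization theorem applied to the Borel set $\{(\alpha,\beta)\in F: t_\mathcal{L}(\beta)\ge\min(v(\alpha),M)-\epsilon\}$, whose fibers are nonempty by condition (ii) and the definition of the supremum. This is valid: the selector is universally measurable, so the pushforward coupling is a legitimate member of the coupling class, and the truncation and $\epsilon$-relaxation let you pass to the limit by monotone convergence. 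The trade-off is clear: your argument is shorter and handles the expectation- and probability-bounded versions by one uniform template, at the cost of importing a nontrivial descriptive-set-theoretic selection theorem; the paper's discretization is entirely elementary and effective (everything reduces to finite covers by clopen sets), which keeps the proof self-contained and in the constructive spirit of the rest of the paper.
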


In particular, a sequence $\beta$ is random with respect to the class $\mathcal{L}$ if and only if $F(\alpha,\beta)$ is true for some $P$-random sequence $\alpha$.

\begin{proof}
Let $t_P$ be the maximal expectation-bounded randomness test with respect to $P$, i.e., maximal lower semicomputable function such that $\int t_P(\omega)\,dP(\omega)\le 1$. Then we may consider the function 
        $$t(\beta)=\inf\{t_P(\alpha)\mid F(\alpha,\beta)\}.$$ 
It is easy to check (see~\cite{gacs}) that $t(\cdot)$ is lower semicomputable. Let us check that 
        $$\int t(\beta)\,dQ(\beta)\le 1\text{  for every distribution }Q\in\mathcal{L}.$$
 According to the definition of $\mathcal{L}$, for every $Q\in\mathcal{L}$ there exists some distribution $S$ on $\Omega\times\Omega$ that is zero outside $F$ and has projections $P$ and $Q$ (for the first and second coordinates). Then the integral in question is equal to $\int_{\Omega\times\Omega} t(\beta)\,dS(\alpha,\beta)$. Since $S$ vanishes outside $F$ and $t(\beta)\le t_P(\alpha)$ when $(\alpha,\beta)\in F$,  the latter integral does not exceed $\int_{\Omega\times\Omega}t_P(\alpha)\,dS(\alpha,\beta)=\int t_P(\alpha)\,dP(\alpha)\le 1$. So the function $t(\beta)$ is a test with respect to $\mathcal{L}$ and does not exceed $t_{\mathcal{L}}(\beta)$. One inequality of the theorem ($\ge$) is proven for expectation-bounded tests.

To prove the same inequality for probability-bounded tests, it is enough to show that if $t_P(\cdot)$ is a probabiity-bounded test with respect to $P$, then $t(\cdot)$ is a probability-bounded test with respect to $\mathcal{L}$. Indeed, for a given threshold $c$ consider the set $U=\{\beta\mid t(\beta)>c\}$. We need to prove that $Q(U)\le 1/c$ for every $Q\in\mathcal{L}$. Let $S$ be the corresponding measure on the product space. Then $Q(U)=S(\Omega\times U)=S((\Omega\times U) \cap F)$. The definition of $t(\cdot)$ guarantees that $\Omega\times U\cap F \subset V\times\Omega$, where $V=\{\alpha\mid t_P(\alpha)>c\}$ (if the infimum exceeds $c$, then all the elements exceed $c$). And $S(V\times\Omega)=P(V)\le 1/c$ since $t_P$ is a probability-bounded test.

So one direction is proven for both types of tests. The other direction is more difficult: here we have to use the assumptions about $F$. Again we consider expectation-bounded tests first. We need to show that
$$
  t_{\mathcal{L}}(\beta)\le \inf_\alpha \{t_P(\alpha)\mid F(\alpha,\beta)\}
$$
(for maximal tests, up to a constant factor). This means that 
$$
t_{\mathcal{L}}(\beta)\le t_P(\alpha)\quad\text{if}\quad F(\alpha,\beta).
$$ 
This can be rewritten also as 
$$
  \sup_\beta \{ t_{\mathcal{L}}(\beta)\mid F(\alpha,\beta)\}\le t_P(\alpha).
$$
It would be guaranteed if the left-hand side, denoted by $t(\alpha)$ in the sequel, were an expectation-bounded $P$-test, i.e., were lower semicomputable and had a bounded integral.

Imagine for a while that the supremum is achieved for some measurable function
   $$\beta(\alpha)=\argmax\{t_\mathcal{L}(\beta)\mid F(\alpha,\beta)\},$$ 
so $t(\alpha)=t_{\mathcal{L}}(\beta(\alpha))$.  By construction, the pair $(\alpha,\beta(\alpha))$ belongs to $F$ for every $\alpha$, so the random pair $(\alpha,\beta(\alpha))$  defines a distribution on $\Omega\times\Omega$ whose support is in $F$ and whose first projection is $P$. Then its second projection $Q$ is in $\mathcal{L}$, so the integral $\int t_{\mathcal{L}}(\beta)\, dQ(\beta)$ does not exceed $1$. On the other hand,
   $$\int t_{\mathcal{L}}(\beta)\, dQ(\beta)=\int t_{\mathcal{L}}(\beta(\alpha))\,dP(\alpha) = \int t(\alpha)\, dP(\alpha),$$
so we get the required bound for the function $t$.

Of course, this argument is not valid: the maximum may be never achieved, so the function $\beta(\alpha)$ is not defined; another problem is that the function $t$ may not be lower semicomputable (it can be guaranteed for  the infimum, but not for the supremum). So we need to replace this argument by a valid one.

We start in the same way and consider the maximal lower semicomputable $\mathcal{L}$-test $t_{\mathcal{L}}$. Let $t_\mathcal{L}^1\le t_\mathcal{L}^2\le\ldots$ be a computable sequence of basic functions (functions with finitely many values having clopen preimages) that converges to $t_{\mathcal{L}}$. Now we may consider the functions
         $$t^i(\alpha)=\max\{t_\mathcal{L}^i(\beta)\mid F(\alpha,\beta)\}$$
they are well defined since $t_{\mathcal{L}}^i$ has only finitely many values. Moreover, the conditions on $F$ guarantee that $t^i(\cdot)$ is a basic function that can be computed given $i$. The sequence $t^i$ increases, so we may consider its supremum, some lower semicomputable function $t$. We will show (see below) that $\int t^i(\alpha)\,dP(\alpha)\le 1$ for every $i$. Then the same is true for~$t$, so $t(\alpha)\le c t_P(\alpha)$ for some $c$ and for all $\alpha$ (due to the maximality of $t_P$). Then, for $(\alpha,\beta)\in F$ we have
$$
 t^i_{\mathcal{L}}(\beta) \le t^i(\alpha) \le t(\alpha) \le c t_P(\alpha),
$$
and it remains to take infimum over all $\alpha$ and then supremum over $i$ to get the reverse inequality we need.

So it remains to prove that $\int t^i(\alpha)\,dP(\alpha)\le 1$ for every $i$. This argument follows the informal explanation above, but instead of the function $\beta(\alpha)$ we use Proposition~\ref{prop:non-empty} to construct a distribution in $\mathcal{L}$.

Fix some $i$. Recall that the function $t^i(\alpha)$ is defined as $\max\{t_\mathcal{L}^i(\beta)\mid F(\alpha,\beta)\}$. Let $c_1>c_2>\ldots> c_n$ be all the values of $t_{\mathcal{L}}$ and $U_k=\{\beta\mid t^i_{\mathcal{L}}(\beta)\ge c_k\}$. Let $V_k=F^{-1}(U_k)=\{\alpha\mid t^i(\alpha)\ge c_k\}$. The sets $U_k$ and $V_k$ increase as $k$ increases; all these sets are clopen.  The function $t^i (\alpha)$ equals $c_1,\ldots,c_n$ when $\alpha$ belongs to the sets $V_1$, $V_2\setminus V_1$,\ldots,$V_{n}\setminus V_{n-1}$ (respectively). Consider now the sets $V_1\times U_1$, $(V_2\setminus V_1)\times U_2$,\ldots, $(V_n\setminus V_{n-1})\times U_n$. On these sets $t^i(\alpha)\le t_{\mathcal{L}}^i(\beta)$. Consider the intersections of these sets with $F$; by construction their first projections coincide with $V_1$, $V_2\setminus V_1$,\ldots. Take the union of these intersections and apply Proposition~\ref{prop:non-empty} to it; we get a distribution whose first projection is $P$; its support is a subset of $F$ where $t^i(\alpha)\le t_{\mathcal{L}}^i(\beta)$. The second projection of this distribution is in $\mathcal{L}$ and it can be used to prove the inequality as described above. 

Similar argument can be used for probability-bounded tests: to show that $t'$ is probability-bounded, it is enough to show that all $t^{i}$ are; the set $V=\{\alpha\mid t^i(\alpha)>c\}$ is the $F$-preimage of the set $U=\{\beta\mid t_{\mathcal{L}}^i(\beta)>c\}$; we apply Proposition~\ref{prop:non-empty} to the set $(V\times U)\cup ((\Omega\setminus V)\times \Omega)$ and note that the inequality $\chi_V(\alpha)\le \chi_U(\beta)$ is true for every pair $\langle \alpha,\beta\rangle$ in this set.
\end{proof}

%\rd{Jason Rute: counterexample that show the necessity of these conditions on $F$??? Ask Jason!}

%\rd{\subsection*{Randomness for semimeasures?}

%Explanation of the problem and the example of Bienvenu, Porter et al.???}

% result about the closed set, randomness for semimeasures

%Missing things: 

\section*{Acknowledgments}

We would like to thank Bruno Bauwens, Wolfgang Merkle, Jason Rute and other participants of the Heidelberg Focus Semester,  Mikhail Andreev, Alexey Milovanov, Gleb Novikov, Andrey Rumyantsev, Nikolay Vereshchagin and other researchers from Kolmogorov seminar in Moscow, Bruno Durand, Andrei Romashchenko and other colleagues in the ESCAPE group in Montpellier, as well as ESCAPE visitors, especially Peter G\'acs, and all the people in the HSE and Poncelet laboratory in Moscow for their hospitality.}

\end{document}